\numberwithin{equation}{section} 
\renewcommand{\Re}{\mathop{\rm Re}\nolimits}
\renewcommand{\Im}{\mathop{\rm Im}\nolimits}
\newtheorem{theorem}{Theorem}
\newtheorem{proposition}{Proposition}
\newtheorem{remark}{Remark}
\begin{document}
\title{Defocusing nonlocal nonlinear Schr\"odinger equation with 
step-like boundary conditions: 
 long-time behavior for  shifted  initial data}
\author{Ya. Rybalko$^{\dag}$ and D. Shepelsky$^{\dag,\ddag}$\\
 \small\em {}$^\dag$ B.Verkin Institute for Low Temperature Physics and Engineering\\ \small\em {} of the National Academy of Sciences of Ukraine\\
 \small\em {}$^\ddag$ V.Karazin Kharkiv National University}


\maketitle

\begin{abstract}
The present paper deals with the long-time asymptotic analysis of the initial value problem for the integrable defocusing nonlocal nonlinear Schr\"odinger equation
$
iq_{t}(x,t)+q_{xx}(x,t)-2 q^{2}(x,t)\bar{q}(-x,t)=0
$
with a step-like initial data: 
$q(x,0)\to 0$ as $x\to -\infty$ and $q(x,0)\to A$ as $x\to +\infty$.
Since the equation is not translation invariant, the solution of this problem
is sensitive to  shifts of the initial data. 
We consider a family of problems, parametrized by $R>0$,
with the initial data that can be viewed as perturbations of 
 the ``shifted step function'' $q_{R,A}(x)$: 
$q_{R,A}(x)=0$ for $x<R$ and $q_{R,A}(x)=A$ for $x>R$, where $A>0$ and $R>0$ are arbitrary constants. 
We show that the asymptotics is qualitatively different in sectors of the 
$(x,t)$ plane, the number of which depends on the relationship between 
$A$ and $R$: for a fixed $A$, the bigger $R$, the larger number of sectors.
Moreover, the sectors can be collected into 2 alternate groups: 
in the sectors of the 
first group, the solution decays to 0 while in the sectors of the second group,
the solution approaches a constant (varying with the direction $x/t=const$).

\textit{Keywords:} Nonlocal nonlinear Schr\"odinger equation, Riemann-Hilbert problem, Long-time asymptotics, Nonlinear steepest descent method.
\end{abstract}

\section{Introduction}

The nonlocal  nonlinear Schr\"odinger (NNLS) equation was introduced by M. Ablowitz and Z. Musslimani \cite{AMP} as a particular reduction of the Schr\"odinger system \cite{AKNS}
\begin{equation}
\label{schrsys}
\begin{cases}
iq_t(x,t)+q_{xx}(x,t)+2q^2(x,t)r(x,t)=0\\
-ir_t(x,t)+r_{xx}(x,t)+2q(x,t)r^2(x,t)=0
\end{cases}
\end{equation}
where $r(x,t)=\sigma\bar{q}(-x,t)$ with $\sigma=\pm1$.  Here and below, $\bar{q}$ denotes the complex conjugate of $q$. Recall that the reduction
 $r(x,t)=\sigma\bar{q}(x,t)$ in (\ref{schrsys}) leads to  the conventional (local) nonlinear Schr\"odinger (NLS) equation. The NNLS equation has attracted a considerable interest due to its distinctive physical and mathematical properties. Particularly, it is an integrable equation \cite{AMP, GS}, which can be viewed as a $PT$ symmetric \cite{BB} system, i.e., if $q(x,t)$ is its solution, so is $\bar{q}(-x,-t)$. Therefore, the NNLS equation is closely related to the theory of $PT$ symmetric systems,  which is a cutting edge field in modern physics, particularly, in optics and photonics, (see e.g. \cite{ZB, B, BKM, ZQCH, GA, KYZ} and references therein). 
Also, (\ref{1-a}) is a particular case of the so-called Alice-Bob integrable systems, which describe various physical phenomena occurring in two (or more)  different  places somehow linked to each other \cite{Lou, LH}: if this two different places are not neighboring, the corresponding model becomes nonlocal. 
Being 
 an integrable system, the NNLS equation possesses exact, soliton-like solutions,
which, however,  have  unusual properties: particularly, solitons can blow up in a finite time, and the equation supports, simultaneously, both dark and anti-dark soliton solutions  (see e.g. \cite{SMMC, Y, YY, CZ, GP, AMFL, AMN, MS}), which is in a sharp contrast with the conventional (local) nonlinear Schr\"odinger equation.

We consider the  Cauchy problem for the defocusing NNLS equation with the step-like initial data:
\begin{subequations}
\label{1}
\begin{align}
\label{1-a}
& iq_{t}(x,t)+q_{xx}(x,t)-2 q^{2}(x,t)\bar{q}(-x,t)=0,&& x\in\mathbb{R},\,t>0,\\
\label{1-b}
& q(x,0)=q_0(x),&& x\in\mathbb{R}
\end{align}
(which corresponds to $\sigma=-1$), where
\begin{equation}
\label{shifted-step-gen}
q_0(x)\to
\begin{cases}
0,\quad x\to -\infty,\\
A,\quad x\to \infty,
\end{cases}
\end{equation}
\end{subequations}
with some $A>0$.
We assume  that the solution $q(x,t)$ satisfies the  boundary conditions 
(consistent with the equation) for all 
$t\geq0$:
\begin{equation}
\label{2}
q(x,t)\to
\begin{cases}
0,\quad x\to -\infty,\\
A,\quad x\to \infty,
\end{cases}
\end{equation}
where the convergence is sufficiently fast.

The choice of the boundary values (\ref{2}) is inspired by the recent progress in 
studying problems with step-like (generally, asymmetric) boundary conditions for conventional (local) integrable equations, such as the Korteweg-de Vries (KdV) equation \cite{GP73, EG, AE1}, the modified KdV equation \cite{KM}, the Toda lattice \cite{VDO, DKKZ, EMT}, the Camassa-Holm equation \cite{Min}, and the conventional focusing \cite{BKS} and defocusing \cite{BFP} NLS equations.  Particularly, asymmetric boundary conditions arise in nonlinear optics, for describing an input  wave with different amplitude in the two limits, and in hydrodynamics, for modeling  shock waves of temporally nondecreasing intensity. Problems with different backgrounds are known to be a rich source of  nonlinear phenomena, such as modulational instability \cite{OOS, ZO}, asymptotic solitons \cite{H, KK, KMnls}, dispersive shock  waves 
\cite{B18, BKS, BV07, EH16}, rarefaction waves \cite{J}, etc. 

In the present paper we deal with the initial conditions $q_0(x)$ close to the 
``shifted step function'' $q_{R,A}(x)$: 
\begin{equation}\label{shifted-step}
q_{R,A}(x)=\begin{cases}
0, & x<R, \\
A, & x>R,
\end{cases}
\end{equation}
 where $A>0$ and $R>0$ are constants. 
 Notice that in the case of local integrable equations which are translation invariant (e.g., the  NLS equation \cite{BKS}), it is clear that the long-time asymptotics 
of the solution of the initial value problem with these initial conditions
along the rays $\frac{x}{4t}=const$ does not depend on $R$.
 But in the case of a nonlocal equation,  the situation is obviously
different: the nonlocal term(s) immediately ``mixes up'' the state of the system at $x$ and $-x$
and thus one expects the different behavior for different $R$.

The case of the focusing NNLS equation (i.e. $\sigma=1$ in (\ref{1-a}))
is considered in \cite{RSfs}, where we show how sectors with different 
long-time behavior arise depending on $A$ and $R$.
Particularly, in \cite{RSfs} we show that 
 for $R\in\left[0,\frac{\pi}{2A}\right)$, there are two  sectors with different asymptotics: (i) the quarter plane $x<0, t>0$, where the  solution decays to 0 
and (ii) the quarter plane $x>0, t>0$, where the solution approaches the
``modulated constant'' (i.e., different (generally, nonzero) constants along different rays  $x/t=const$). 
 Moreover, if $R\in\left(\frac{(2n-1)\pi}{2A},\frac{(2n+1)\pi}{2A}\right)$ for some $n\in\mathbb{N}$, then  each quarter plane,  $x>0, t>0$ and $x<0, t>0$, 
splits  into $2n+1$ sectors with different asymptotic behavior, the sectors with decay altering the sectors with ``modulated constant'' limits. Thus the 
number of sectors in each quarter plane is always odd.

In the present paper, dealing   the defocusing case,  we show that 
 the asymptotic picture is similar to that
in the focusing case, an important difference being that 
the number of sectors in each quarter plane (with qualitatively different long-time
behavior) is always even. More precisely, 
if $\frac{(n-1)\pi}{A} < R < \frac{n\pi}{A}$ for some $n\in\mathbb{N}$,
and if the initial data is close to $q_{R,A}(x)$ with the parameters
satisfying the inequality above, then the following result holds:
\begin{theorem}\label{cor1}
Under  Assumptions (a)--(c), see  Section \ref{bRH}, on the
 spectral functions associated with the initial data
$q_0(x)$, 
the solution of the initial value problem (\ref{1}), (\ref{2}) has the following 
asymptotics as $t\to +\infty$, qualitatively different in  sectors of the $(x,t)$ plane
specified by ranges of $\xi=\frac{x}{4t}$:
\begin{equation}\label{asq1}
q(x,t)=\left\{
\begin{aligned}
& A\delta^2(0,\xi)
\prod\limits_{s=0}^{m-1}\left(\frac{\xi}{p_{n-s}}\right)^2+o(1),& -\Re p_{n-m}<\xi<\omega_{n-m},\\
&o(1),& -\omega_{n-m}<\xi<\Re p_{n-m}, \\
&\frac{-4\overline{p}_{n-m}^2}{A\overline{\delta^2}(0,-\xi)}
\prod\limits_{s=0}^{m-1}\left(\frac{\overline{p}_{n-s}}
{\xi}\right)^2+o(1),& \Re p_{n-m}<\xi<-\omega_{n-m-1},\\
&o(1),& \omega_{n-m-1}<\xi<-\Re p_{n-m}.
\end{aligned}
\right.
\end{equation}
Here  $m=\overline{0,n-1}$, the function $\delta(0,\xi)$ and 
the numbers $\{p_j\}_1^n$ and $\{\omega_j\}_1^{n-1}$ 
($p_j\in\mathbb C$ with $\Im p_j>0$
and $\omega_j\in\mathbb R$) satisfying
\[
-\infty<\Re p_n < -\omega_{n-1}<\Re p_{n-1} < -\omega_{n-2}<\dots <\Re p_1 <0
\]
are determined in terms of the spectral functions associated with the initial data
$q_0(x)$, see (\ref{delta}) and  Assumptions (a)--(c). 
Particularly,   in the case  $n=1$, the principal asymptotic terms are as in
 Figure \ref{asill}.
\end{theorem}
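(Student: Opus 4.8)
The plan is to apply the Deift--Zhou nonlinear steepest descent method to the Riemann--Hilbert (RH) problem attached, via the inverse scattering transform, to (\ref{1}), (\ref{2}), following the scheme of the focusing analysis \cite{RSfs} but tracking how the defocusing sign reshapes the phase geometry. First I would recall from Section \ref{bRH} the basic RH problem: a piecewise-holomorphic $2\times2$ matrix $M(k)=M(k;x,t)$ normalized by $M(k)\to I$ as $k\to\infty$, with a jump across $\mathbb{R}$ (and, since $q\to A$ as $x\to+\infty$, an additional contour carrying the branch structure associated with the $x=+\infty$ background) whose jump matrices are built from the reflection coefficient $r(k)$ together with its reflected conjugate $\bar r(-k)$, the nonlocal reduction $r=-\bar q(-\cdot)$ forcing this symmetry; the solution is then recovered from $q(x,t)=2i\lim_{k\to\infty}k\,(M(k))_{12}$, and consistency with $\bar q(-x,t)$ is automatic from the symmetry of the data. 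The oscillatory entries carry factors $e^{\pm 2it\theta(k,\xi)}$ with $\theta(k,\xi)=2(k^2+2k\xi)$ and stationary point $k_0=-\xi$.

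Next I would carry out the sign analysis, which is where the shift $R>0$ enters. Because the initial data is a perturbation of the shifted step $q_{R,A}$, the reflection coefficient contains a factor $e^{2ikR}$, bounded only for $\Im k\ge0$; hence in the triangular (lens-opening) factorizations of the jump one must combine $\Re(it\theta)$ with the linear term $-kR$. The resulting $\xi$-dependent sign table produces a finite family of special spectral points $\{p_j\}_1^n$ in the open upper half-plane, at which the scalar factor used to triangularize the jump acquires poles, together with the reals $\{\omega_j\}_1^{n-1}$ at which such a point crosses the critical line $\Re k=k_0$. Using Assumptions (a)--(c) one shows that this family has exactly $n$ members precisely when $\tfrac{(n-1)\pi}{A}<R<\tfrac{n\pi}{A}$ and that they interlace as $-\infty<\Re p_n<-\omega_{n-1}<\dots<\Re p_1<0$; the defocusing sign, which makes $1+r(k)\bar r(-k)>0$ and keeps the scalar function $\delta(k,\xi)$ of (\ref{delta}) bounded, is what pins down this ordering and, ultimately, forces the \emph{even} number of sectors (in contrast to the odd count in \cite{RSfs}).

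Then, sector by sector, i.e.\ for each $\xi$-interval listed in (\ref{asq1}), I would: solve the scalar RH problem (\ref{delta}) for $\delta(k,\xi)$ on the interval determined by $k_0$; conjugate $M$ by $\mathrm{diag}(\delta^{-1},\delta)$-type factors to triangularize the jump; remove, by explicit rational (Blaschke-type) matrices, exactly those $p_{n-s}$ with $0\le s\le m-1$ that fall on the productive side of the deformed contour, each such removal contributing a factor $(\xi/p_{n-s})^2$ to the reconstruction formula while the remaining $p_j$ stay inert; and open lenses so that the jump on the new contour is exponentially close to $I$ away from $k_0$ and matches a standard (parabolic-cylinder) parametrix near $k_0$. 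A small-norm / $\bar\partial$ argument then bounds the error in $M$ by $O(t^{-1/2})=o(1)$. Reading off $q(x,t)=2i\lim k\,(M)_{12}$ gives, in the intervals abutting the $x<0$ (zero-background) region, pure decay $o(1)$, and in the intervals abutting the $x>0$ ($A$-background) region the surviving $\delta$-contribution $A\delta^2(0,\xi)\prod_{s=0}^{m-1}(\xi/p_{n-s})^2$ --- the first line of (\ref{asq1}). The two remaining lines follow by applying the RH-data symmetry $k\mapsto-\bar k$, which maps the value in a sector to $-4\bar p_{n-m}^2\bigl(A\,\overline{\delta^2}(0,-\xi)\bigr)^{-1}\prod_{s=0}^{m-1}(\bar p_{n-s}/\xi)^2$ in the mirror interval; the case $n=1$ reproduces Figure \ref{asill}.

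The main obstacle is the spectral bookkeeping of the middle step: extracting from the analytic structure of the shifted-step spectral functions the precise count and location of the $p_j$ and $\omega_j$, proving the interlacing $-\infty<\Re p_n<-\omega_{n-1}<\dots<\Re p_1<0$, and checking that as $\xi$ sweeps through $\pm\Re p_j$ and $\pm\omega_j$ a single spectral point migrates across the critical line so that one factor $(\xi/p_j)^2$ is switched on or off --- exactly the mechanism that makes the asymptotics jump between adjacent sectors and yields an even number of them. A secondary technical difficulty is the careful handling (removal by rational factors) of the poles of $\delta(k,\xi)$ and of the jump matrix at the edges of the $A$-background band, which must assemble into the stated products with no spurious terms; this is also the place where the defocusing sign guarantees boundedness of all the pieces, so that no genus-one (elliptic) region appears.
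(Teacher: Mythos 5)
Your overall skeleton (RH formulation of the IST, Deift--Zhou deformations, sector-by-sector rational conjugations, a parabolic-cylinder parametrix at $k=-\xi$, and use of the second reconstruction formula for the mirror sectors) is the same as the paper's, but two of your central mechanisms are wrong, and they are precisely the ones that produce the plateau values in (\ref{asq1}). First, for the nonlocal equation the $A$-background at $x\to+\infty$ does \emph{not} generate a band/branch-cut contour: the background matrices $N_\pm(k)$ are rational with a pole at $k=0$, so the basic RH problem has a jump on $\mathbb{R}$ only, supplemented by the pseudo-residue condition (\ref{z}) at $k=0$, which after the deformations becomes the genuine residue condition (\ref{zc}) with $c_0(\xi)=\frac{A\delta^2(0,\xi)}{2i}$. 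It is this $k=0$ residue --- never mentioned in your proposal --- that survives when all jumps and the other residues decay; solving the resulting algebraic problem (\ref{MRHasymp}) gives $q\simeq 2i\,c_0^{as}(\xi)=A\delta^2(0,\xi)\prod_{s=0}^{m-1}(\xi/p_{n-s})^2$, whereas a ``surviving $\delta$-contribution from the band edge'' has nothing evaluated at $k=0$ and cannot yield this formula. Likewise the third line of (\ref{asq1}) is not obtained by applying the symmetry $k\mapsto-\bar k$ to the first line: in the sectors $\Re p_{n-m}<-\xi<-\omega_{n-m-1}$ there is one additional growing residue (at $p_{n-m}$), and the paper performs an extra Blaschke--Potapov-type swap with $d(k)=k/(k-p_{n-m})$ (see (\ref{35})--(\ref{J-ii})) that trades this residue against the one at $k=0$, producing $c_0^{as\#}(\xi)$ in (\ref{c0as2}) and hence the value $-4\overline{p}_{n-m}^2/\bigl(A\overline{\delta^2}(0,-\xi)\bigr)\prod_{s=0}^{m-1}(\overline{p}_{n-s}/\xi)^2$; this step is absent from your plan.

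Second, your sign analysis rests on the claim that the defocusing reduction makes $1-r_1(k)r_2(k)$ positive and keeps $\delta(k,\xi)$ bounded. For the nonlocal equation this is false: $r_2\neq\bar r_1$, so $1-r_1r_2=1/(a_1a_2)$ is genuinely complex, its argument winds (Assumption (c), eq. (\ref{windingRH})), $\Im\nu(-\xi)$ lies in $(m-\tfrac12,m+\tfrac12)$ by (\ref{3.8}), and $\delta$ in (\ref{deltasing}) has a \emph{strong} singularity at $k=-\xi$. The paper's key step (\ref{singul}) --- conjugation by $\prod_{s=0}^{m-1}\bigl(\tfrac{k+\xi}{k-p_{n-s}}\bigr)^{\sigma_3}$ --- simultaneously reduces the exponent at $-\xi$ to $\Im\nu(-\xi)-m\in(-\tfrac12,\tfrac12)$ and converts the $m$ exponentially growing residues into decaying ones; your proposal denies the need for this, so it can neither control $\delta$ near $k=-\xi$ nor explain why thresholds occur both at $\xi=\mp\Re p_j$ (where a residue factor $e^{2ip_jx+4ip_j^2t}$ switches between decay and growth) and at $\xi=\pm\omega_j$ (where the winding integer $m$ jumps). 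Relatedly, the $p_j$ are zeros of $a_1$ in $\overline{\mathbb{C}^+}$ (explicit for the pure step via $a_1(k)=1-\frac{A^2}{4k^2}e^{4ikR}$), not poles of a triangularizing scalar factor created by $e^{2ikR}$ in the reflection coefficient, and the $\omega_j$ are defined by the winding of $\arg(a_1a_2)$, not by a spectral point crossing the line $\Re k=-\xi$. These are not cosmetic differences: without the $k=0$ residue and the singularity/winding mechanism, your scheme would return $o(1)$ in every sector and could not reproduce (\ref{asq1}).
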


\begin{figure}
	\begin{minipage}[ht]{0.99\linewidth}
		\centering{\includegraphics[width=0.7\linewidth]{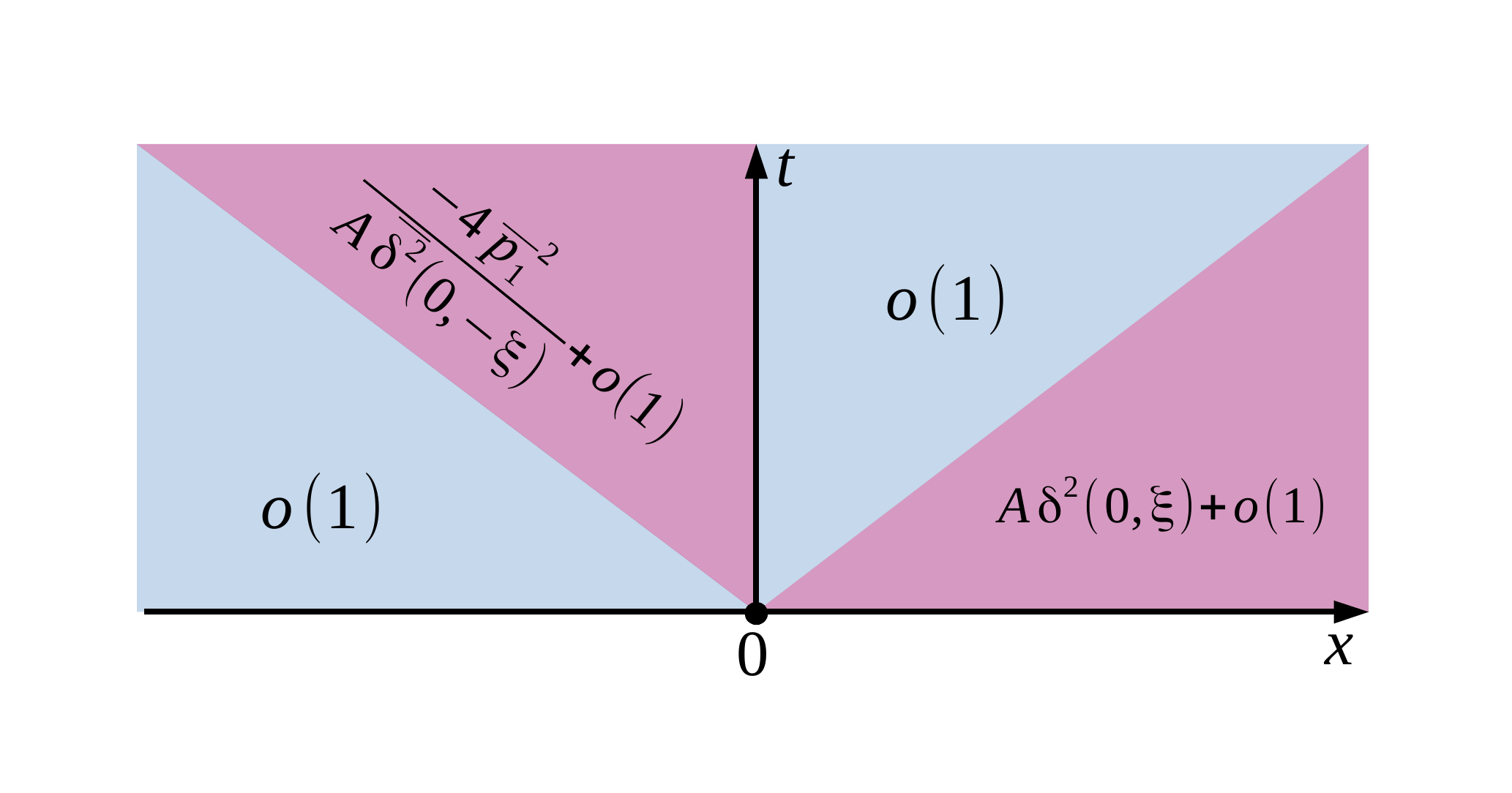}}
		\caption{Asymptotic behavior of the solution of problem (\ref{1}), (\ref{2})
		satisfying Assumptions (a)-(c) with $n=1$.}
		\label{asill}
	\end{minipage}
\end{figure}

Moreover, we are able to make this asymptotics more precise, either 
including in them a second term or writing explicitly a main decaying term
for the corresponding sectors, see Theorem \ref{th1}.

Our main tools used for obtaining these results are
 the inverse scattering transform (IST) method in the form of 
a Riemann-Hilbert (RH) problem and the subsequent use of the 
nonlinear steepest decent method (Deift and Zhou method; see \cite{DZ, DIZ} and   \cite{DVZ94, DVZ97, MM} for its extensions) for the large time analysis of the basic RH problem. Two main peculiarities of the adaptation of this approach 
to our problem are (i) the presence of a singularity on the contour for the 
original RH problem and (ii) the winding of the argument of certain spectral functions leading  to a strong singularity  on the contour 
for the ``deformed RH problem'' (needed for performing the long-time
analysis, see (\ref{windingRH}) and (\ref{p-res-2}) below).

The article is organized as follows. In Section 2 we present the implementation of the inverse scattering transform method for the initial value problem (\ref{1}) in the form of a Riemann--Hilbert problem and examine the properties of the spectral functions associated to the initial data. 
A special attention is payed to the case of ``pure step initial data'',
i.e., for $q(x,0)=q_{R,A}(x)$, since this case provides ingredients guiding the study in the general case.
The asymptotic analysis of the associated Riemann--Hilbert problem and the main result of the paper (Theorem \ref{th1}) are presented in Section 3. In Section 4 we briefly discuss the asymptotics in the transition zones,  and a short conclusion 
is given in Section 5. 

\section{Inverse scattering transform and the Riemann-Hilbert problem}\label{ist}

The implementation of the Riemann-Hilbert problem approach to the step-like problems for local NLS equations substantially differs in the defocusing and focusing cases, due, in particular, to the fact that 
the structure of the spectrum of the associated differential operators 
(from the Lax pair representation) is different: either the whole 
spectrum is located  on the real axis (defocusing case)  or a part of it is 
outside this axis (focusing case). With this respect, we notice that the focusing and defocusing 
variants of the NNLS equation are closer to each other: in the both cases,
(i) there is a point singularity 
on the real axis and (ii) the winding of the argument of certain spectral functions
takes place, which affects the consequent asymptotic analysis.
The next subsection presents the results of the direct scattering analysis 
for the both (focusing and defocusing) cases.

\subsection{Direct scattering}\label{ds}

As we have already mentioned, 
the NNLS equation (\ref{1-a}) is a compatibility condition of the system of two linear differential equations, the so-called  Lax pair  \cite{AMP}:
\begin{equation}
\label{LP}
\left\{
\begin{array}{lcl}
\Phi_{x}+ik\sigma_{3}\Phi=U(x,t)\Phi\\
\Phi_{t}+2ik^{2}\sigma_{3}\Phi=V(x,t,k)\Phi\\
\end{array}
\right.
\end{equation}
where $\sigma_3=\left(\begin{smallmatrix} 1& 0\\ 0 & -1\end{smallmatrix}\right)$, $\Phi(x,t,k)$ is a $2\times2$ matrix-valued function, $k\in\mathbb{C}$ is an auxiliary (spectral) parameter, and the matrix coefficients $U(x,t)$ and $V(x,t,k)$ can be written  in terms of the solution $q(x,t)$ of the NNLS equation as follows:
\begin{equation}
U(x,t)=\begin{pmatrix}
0& q(x,t)\\
-\sigma\bar{q}(-x,t)& 0\\
\end{pmatrix},\qquad 
V=\begin{pmatrix}
V_{11}& V_{12}\\
V_{21}& V_{22}\\
\end{pmatrix},
\end{equation}
where $V_{11}=-V_{22}=i\sigma q(x,t)\bar{q}(-x,t)$, $V_{12}=2kq(x,t)+iq_{x}(x,t)$,
$V_{21}=-2k\sigma\bar{q}(-x,t)+i\sigma(\bar{q}(-x,t))_{x}$.

Taking into account the boundary conditions (\ref{2}) and assuming that the solution $q(x,t)$ of the problem (\ref{1}) exists, we conclude that the matrices $U(x,t)$ and $V(x,t,k)$ converge to the following constant (w.r.t. $x$ and $t$) matrices (cf.  \cite{RSs}):
\begin{equation}
U(x,t)\rightarrow U_{\pm}\ \mbox{and}\ 
V(x,t,k)\rightarrow V_{\pm}(k)\quad \mbox{as}\  x\rightarrow\pm\infty,
\end{equation}
where
\begin{equation}
U_+=
\begin{pmatrix}
0 & A\\
0 & 0
\end{pmatrix},\,
U_-=
\begin{pmatrix}
0 & 0\\
-\sigma A & 0
\end{pmatrix},\,
V_+(k)=
\begin{pmatrix}
0 & 2kA\\
0 & 0
\end{pmatrix},\,
V_-(k)=
\begin{pmatrix}
0 & 0\\
-2\sigma kA & 0
\end{pmatrix}.
\end{equation}
Notice that the system (\ref{LP}) is compatible with $U_+$, $V_+$ or $U_-$, $V_-$ used instead of $U$ and $V$, so the boundary conditions $(\ref{2})$ are meaningful for the NNLS equations (in contrast with their local counterparts, where the boundary conditions 
have to be exact solutions depending on $x$ and $t$).

Introduce the ``background solutions'' $\Phi_{\pm}$
which are the solutions of the system (\ref{LP}) with $U(x,t)$ and $V(x,t,k)$ 
replaced by $U_\pm$ and $V_\pm$ respectively:
\begin{equation}
\label{5}
\Phi_{\pm}(x,t,k)=N_{\pm}(k)e^{-(ikx+2ik^2t)\sigma_3},
\end{equation}
where 
$
N_+(k)=
\begin{pmatrix}
1 & \frac{A}{2ik}\\
0 & 1
\end{pmatrix},\,
N_-(k)=
\begin{pmatrix}
1 & 0\\
\frac{\sigma A}{2ik} & 1
\end{pmatrix}.
$
Similarly to  the case of the focusing NNLS equation \cite{RSs, RSfs}, $N_{\pm}(k)$ have singularities at $k=0$, which play a significant role in the analysis (see the basic Riemann-Hilbert problem in  Section \ref{bRH} below).

Next define the $2\times2$-valued functions $\Psi_j(x,t,k)$, $j=1,2$, $x\in\mathbb{R}$, $t\geq0$ as the solutions of the linear Volterra integral equations:
\begin{subequations}\label{6}
\begin{align}
\label{6-1}
&\Psi_1(x,t,k)=N_-(k)+\int^x_{-\infty}G_-(x,y,t,k)\left(U(y,t)-U_-\right)\Psi_1(y,t,k)e^{ik(x-y)\sigma_3}\,dy,\,
k\in(\mathbb{C}^+,\mathbb{C}^-),\\
\label{6-2}
&\Psi_2(x,t,k)=N_+(k)-\int_x^{\infty}G_+(x,y,t,k)\left(U(y,t)-U_+\right)\Psi_2(y,t,k)e^{ik(x-y)\sigma_3}\,dy,\,
k\in(\mathbb{C}^-,\mathbb{C}^+),
\end{align}
\end{subequations}
where the Cauchy matrices $G_{\pm}(x,y,t,k)$ have the form
\begin{equation}\label{CG}
G_{\pm}(x,y,t,k)=\Phi_{\pm}(x,t,k)[\Phi_{\pm}(y,t,k)]^{-1},
\end{equation}
and $\mathbb{C}^{\pm}=\left\{k\in\mathbb{C}\,|\pm\Im k>0\right\}$.
Here and below, the notation
$k\in(\mathbb{C}^+,\mathbb{C}^-)$ ($k\in(\mathbb{C}^-,\mathbb{C}^+)$), means that the first and the second column of the relevant matrix can be analytically continued from the real axis into respectively the upper (lower) and lower (upper) half-plane as bounded functions. The functions $\Psi_j(x,t,k)$, $j=1,2$, play a significant role in the analysis, namely, they are the key elements in the construction of the basic Riemann-Hilbert problem (see Section \ref{bRH} below). We summarize their main properties (cf. \cite{RSs}) in 
\begin{proposition}\label{psi-prop}
The $2\times2$-valued matrix functions $\Psi_j(x,t,k)$, $j=1,2$ (see \ref{6}) have the following properties:
\begin{enumerate}[(i)]
\item The  functions 
\begin{equation}\label{Jost}
\Phi_j(x,t,k):=\Psi_j(x,t,k)e^{-(ikx+2ik^2t)\sigma_3},\quad k\in\mathbb{R}\setminus\{0\},\quad j=1,2,
\end{equation}
are the Jost solutions of the system (\ref{LP}) satisfying the  boundary conditions (see (\ref{5}))
$$
\Phi_1(x,t,k)\to\Phi_-(x,t,k)\ \ \text{as}\ x\to-\infty,\quad
\Phi_2(x,t,k)\to\Phi_+(x,t,k)\ \ \text{as}\ x\to+\infty.
$$
\item $\det\Psi_j(x,t,k)=1,\quad x,k\in\mathbb{R},\,t\geq0,\quad j=1,2.$
\item The  symmetry property:
\begin{equation}\label{psi-sym}
\Lambda\overline{\Psi_1(-x,t,-k)}\Lambda^{-1}=\Psi_2(x,t,k),\quad k\in\mathbb{R}\setminus\{0\},
\end{equation}
where $\Lambda=\begin{pmatrix}0 & \sigma\\1& 0\end{pmatrix}$.
\item As $k\to0$, the columns $\Psi_j^{(1)}(x,t,k)$ and $\Psi_j^{(2)}(x,t,k)$
of $\Psi_j(x,t,k)$, $j=1,2$ behave as follows:
\begin{subequations}\label{add-symm}
	\begin{align}
	&\Psi_1^{(1)}(x,t,k)=\frac{1}{k}\begin{pmatrix}v_1(x,t)\\ v_2(x,t)\end{pmatrix}+O(1),
	&\Psi_1^{(2)}(x,t,k)=\frac{2i\sigma}{A}\begin{pmatrix}
	v_1(x,t)\\v_2(x,t)\end{pmatrix}+O(k),\\
	&\Psi_2^{(1)}(x,t,k)=-\frac{2i}{A}\begin{pmatrix}
	\sigma\overline{v_2}(-x,t)\\
	\overline{v_1}(-x,t)\end{pmatrix}+O(k),
	&\Psi_2^{(2)}(x,t,k)=-\frac{1}{k}\begin{pmatrix}
	\sigma\overline{v_2}(-x,t)\\\overline{v_1}(-x,t)\end{pmatrix}+O(1),
	\end{align}
\end{subequations}
where $v_1(x,t)$ and $v_2(x,t)$ are some functions.
\end{enumerate}
\end{proposition}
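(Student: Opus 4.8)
The plan is to derive all four properties from the Volterra integral equations (\ref{6}) together with the structure of the background solutions $\Phi_\pm$ and the matrices $U_\pm$, $N_\pm$. First I would establish (i): differentiating the right-hand side of (\ref{6-1}) with respect to $x$ (using $G_-(x,x,t,k)=I$ and the $x$-dependence of the Cauchy kernel, which satisfies $\partial_x G_- = (-ik\sigma_3+U_-)G_-$) shows that $\Psi_1$ solves the $x$-part of (\ref{LP}) after conjugation by $e^{ikx\sigma_3}$; the $t$-dependence enters only through the kernel $G_-$ built from $\Phi_-$, and the compatibility of the Lax pair on the background (already noted in the text, since $U_-,V_-$ solve the system) gives the $t$-equation as well. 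The boundary behavior $\Phi_1\to\Phi_-$ as $x\to-\infty$ is immediate from (\ref{6-1}) because the integral term vanishes in that limit, and similarly for $\Phi_2$. For (ii), I would note that $\det\Psi_j$ is independent of $x$ (the trace of $-ik\sigma_3+U$ vanishes, so Abel's formula applies to the columns as solutions of a first-order ODE in $x$), and evaluate the limit $x\to\mp\infty$ where $\Psi_j\to N_\mp$, with $\det N_\pm = 1$.

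For the symmetry (iii), the key observation is that the map $\Psi(x,t,k)\mapsto \Lambda\overline{\Psi(-x,t,-k)}\Lambda^{-1}$ sends solutions of the $\Psi_1$-equation to solutions of the $\Psi_2$-equation: under $x\mapsto -x$, $k\mapsto -k$, complex conjugation, and conjugation by $\Lambda$, the matrix $U(y,t)-U_-$ transforms into $U(-y,t)-U_+$ (using the explicit form of $U$ with the reduction $r=\sigma\bar q(-x,t)$ and $\sigma^2=1$), the exponential factor $e^{ik(x-y)\sigma_3}$ is preserved, the limits of integration flip from $(-\infty,x)$ to $(-x,\infty)$ with the correct sign, and $\Lambda\overline{N_-(-k)}\Lambda^{-1}=N_+(k)$. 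Since both sides solve the same Volterra equation (\ref{6-2}) with the same inhomogeneous term, uniqueness of solutions to Volterra equations forces the identity (\ref{psi-sym}). This is the step I expect to be the main obstacle: one must track carefully how the Cauchy matrix $G_\pm$, which involves $\Phi_\pm$ and hence the singular factors $N_\pm(k)$ and the exponentials $e^{\mp(ikx+2ik^2t)\sigma_3}$, behaves under the combined involution — in particular checking that $\Lambda\overline{G_-(-x,-y,t,-k)}\Lambda^{-1}=G_+(x,y,t,k)$, which relies on $\overline{2ik^2}\big|_{k\to-k}=-2ik^2$ being compensated by the $\sigma_3$-conjugation.

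Finally, for (iv) I would analyze the behavior of $\Psi_j$ as $k\to 0$. The source of the singularity is $N_\pm(k)$, whose second column (for $N_+$) and first column (for $N_-$) blow up like $\frac{A}{2ik}$ and $\frac{\sigma A}{2ik}$ respectively. Substituting the Laurent ansatz $\Psi_1^{(1)}=\frac{1}{k}\psi_{-1}(x,t)+\psi_0(x,t)+O(k)$ into (\ref{6-1}) and collecting powers of $k$ yields a closed Volterra equation for the residue vector $\psi_{-1}=(v_1,v_2)^{\mathsf T}$; the columns of $N_-$ determine that only $\Psi_1^{(1)}$ has a pole (coming from the $\frac{\sigma A}{2ik}$ entry), while $\Psi_1^{(2)}$ is regular, and the relation $\Psi_1^{(2)}(x,t,0)=\frac{2i\sigma}{A}(v_1,v_2)^{\mathsf T}$ follows because the constant column of $N_-$ is $(1,0)^{\mathsf T}=\frac{2i\sigma}{A}\cdot\frac{\sigma A}{2i}(1,0)^{\mathsf T}$ and the leading coefficient of the regular column is proportional, through the integral equation, to the same vector that controls the pole of the singular column (the proportionality constant $\frac{2i\sigma}{A}$ being read off from $N_-$). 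The statements for $\Psi_2^{(1)},\Psi_2^{(2)}$ then follow either by the same direct argument with $N_+$ or, more economically, by applying the symmetry (\ref{psi-sym}) already proved in (iii) to the expansions for $\Psi_1$, which automatically produces the vector $(\sigma\overline{v_2}(-x,t),\overline{v_1}(-x,t))^{\mathsf T}$ with the stated coefficients $-\frac{2i}{A}$ and $-\frac{1}{k}$.
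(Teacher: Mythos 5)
Your proposal is correct and follows essentially the same route as the paper: (i) by direct verification, (ii) via tracelessness together with the unit-determinant boundary values $N_\pm$, (iii) via the symmetry of $U$ under $x\mapsto-x$, $k\mapsto-k$, conjugation by $\Lambda$ and uniqueness for the Volterra equations, and (iv) by expanding at $k=0$, comparing the Volterra equations for the residue of $\Psi_1^{(1)}$ and for $\Psi_1^{(2)}(x,t,0)$, and transferring to $\Psi_2$ through (iii), exactly as in the paper's comparison of (\ref{vv}) and (\ref{2.14}). One minor slip: the constant (second) column of $N_-$ is $(0,1)^{\mathsf T}$, not $(1,0)^{\mathsf T}$, and the relevant proportionality is between the residue vector $\left(0,\tfrac{\sigma A}{2i}\right)^{\mathsf T}$ of the first column of $N_-$ and $(0,1)^{\mathsf T}$, which is what produces the factor $\tfrac{2i\sigma}{A}$.
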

\begin{proof}
Item $(i)$ can be verified directly from the definition of $\Psi_j(x,t,k)$ (see (\ref{6})). Item $(ii)$ follows from the facts that (a) $\det\Phi_j(x,t,k)=\det\Psi_j(x,t,k)$ for $x,k\in\mathbb{R}$ and $t\geq0$ and (b) $U(x,t)$ and $V(x,t)$ are traceless matrices.
Item $(iii)$  follows from the symmetry 
$
\Lambda
\overline{U(-x,t)}
\Lambda^{-1}=U(x,t).
$

Concerning item $(iv)$, we observe that the structure of the singularity of $N_\pm(k)$ as $k\to0$ and the definition of $\Psi_j(x,t,k)$, $j=1,2$ imply that, as $k\to 0$, 
\begin{subequations}
	\label{vw}
	\begin{align}
	\label{vwa}
	&\Psi_1^{(1)}(x,t,k)=\frac{1}{k}\begin{pmatrix}v_1(x,t)\\ v_2(x,t)\end{pmatrix}+O(1),
	&\Psi_1^{(2)}(x,t,k)=\begin{pmatrix}\tilde{v}_1(x,t)\\ \tilde{v}_2(x,t)\end{pmatrix}+O(k),\\
	\label{vwb}
	&\Psi_2^{(1)}(x,t,k)=\begin{pmatrix}\tilde{w}_1(x,t)\\ \tilde{w}_2(x,t)\end{pmatrix}+O(k),
	&\Psi_2^{(2)}(x,t,k)=\frac{1}{k}\begin{pmatrix}w_1(x,t)\\ w_2(x,t)\end{pmatrix}+O(1).
	\end{align}
\end{subequations}
Then, from the symmetry relation (\ref{psi-sym}) it follows that
\begin{equation}
\begin{pmatrix}
w_1(x,t)\\w_2(x,t)
\end{pmatrix}=
\begin{pmatrix}
-\sigma\overline{v_2}(-x,t)\\
-\overline{v_1}(-x,t)
\end{pmatrix}\quad\text{and}\quad
\begin{pmatrix}\tilde{w}_1(x,t)\\ \tilde{w}_2(x,t)
\end{pmatrix}=
\begin{pmatrix}
\overline{\tilde{v}_2}(-x,t)\\
\sigma\overline{\tilde{v}_1}(-x,t)
\end{pmatrix}.
\end{equation}
Further, substituting (\ref{vwa})  into 
(\ref{6-1})  we conclude that
$v_j(x,t)$, $j=1,2$ satisfy the system of integral equations
\begin{equation}
\label{vv}
\begin{cases}
v_1(x,t)=\int_{-\infty}^{x}q(y,t)v_2(y,t)\,dy,\\
v_2(x,t)=-i\sigma\frac{A}{2}-\sigma\int_{-\infty}^{x}
\overline{q(-y,t)}v_1(y,t)\,dy.
\end{cases}
\end{equation}
whereas 
$\tilde{v}_j(x,t)$, $j=1,2$ solve the following system of equations:
\begin{equation}\label{2.14}
\begin{cases}
\tilde{v}_1(x,t)=\int_{-\infty}^{x}q(y,t)\tilde{v}_2(y,t)\,dy,\\
\tilde{v}_2(x,t)=1-\sigma\int_{-\infty}^{x}
\overline{q(-y,t)}\tilde{v}_1(y,t)\,dy.
\end{cases}
\end{equation}
Comparing (\ref{vv}) with (\ref{2.14}) it follows that 
$$
\begin{pmatrix}
\tilde{v}_1(x,t)\\
\tilde{v}_2(x,t)
\end{pmatrix}=\frac{2i\sigma}{A}
\begin{pmatrix}
v_1(x,t)\\v_2(x,t)
\end{pmatrix}.
$$
\end{proof}

Since the Jost solutions $\Phi_1(x,t,k)$ and $\Phi_2(x,t,k)$ defined by (\ref{Jost}) satisfy the system (\ref{LP}) for all 
$k\in\mathbb{R}\setminus\{0\}$, they are related by a matrix function independent of $x$ and $t$:
\begin{equation}
\Phi_1(x,t,k)=\Phi_2(x,t,k)S(k),\quad k\in\mathbb{R}\setminus\{0\},
\end{equation}
or, in terms of $\Psi_j(x,t,k)$, $j=1,2$
\begin{equation}
\label{9}
\Psi_1(x,t,k)=\Psi_2(x,t,k)e^{-(ikx+2ik^2t)\sigma_3}S(k)e^{(ikx+2ik^2t)\sigma_3},\,\,k\in\mathbb{R}\setminus\{0\},
\end{equation}
where $S(k)$ is the so-called scattering matrix. Due to the symmetry (\ref{psi-sym}) the matrix $S(k)$ can be written as follows (cf. \cite{RSs})
\begin{equation}
S(k)=
\begin{pmatrix}
a_1(k)& -\sigma\overline{b(-k)}\\
b(k)& a_2(k)
\end{pmatrix},\qquad k\in\mathbb{R}\setminus\{0\},
\end{equation}
with some functions (the so-called spectral functions) $b(k)$ and $a_j(k)$, $j=1,2$, which satisfy the symmetry relation $\overline{a_j(-k)}=a_j(k)$, $j=1,2$. The spectral functions can be obtained in terms of the initial data only:
\begin{equation}
S(k)=\Psi_2^{-1}(0,0,k)\Psi_1(0,0,k),
\end{equation}
or, alternatively, by using the determinant relations
\begin{subequations}\label{sd}
	\begin{align}
	b(k)&=\det\left(\Psi_2^{(1)}(0,0,k),\Psi_1^{(1)}(0,0,k)\right),
	\quad k\in\mathbb{R},\\
	a_1(k)&=\det\left(\Psi_1^{(1)}(0,0,k),\Psi_2^{(2)}(0,0,k)\right),
	\quad k\in\overline{\mathbb{C}^{+}}\setminus\{0\},\\
	a_2(k)&=\det\left(\Psi_2^{(1)}(0,0,k),\Psi_1^{(2)}(0,0,k)\right),
	\quad k\in\overline{\mathbb{C}^{-}}.
	\end{align}
\end{subequations}

We summarize the properties of the spectral functions $b(k)$ and $a_j(k)$, $j=1,2$ in the following proposition (cf. \cite{RSs}; particularly, item 5 below follows from
(\ref{add-symm})):
\begin{proposition}\label{properties}
The spectral functions $b(k)$, $a_j(k)$, $j=1,2$, have the following properties
\begin{enumerate}
\item 
$a_{1}(k)$ is analytic in  $k\in\mathbb{C}^{+}$
and continuous in 
$\overline{\mathbb{C}^{+}}\setminus\{0\}$;
$a_{2}(k)$ is analytic in $k\in\mathbb{C}^{-}$
and continuous in 
$\overline{\mathbb{C}^{-}}$. 
\item
$a_{j}(k)=1+{O}\left(\frac{1}{k}\right)$, $j=1,2$ and 
$b(k)={O}\left(\frac{1}{k}\right)$ as $k\rightarrow\infty$ (the latter holds
for $k\in{\mathbb R}$).
\item
$\overline{a_{1}(-\bar{k})}=a_1(k)$,  
$k\in\overline{\mathbb{C}^{+}}\setminus\{0\}$; \qquad
$\overline{a_{2}(-\bar{k})}=a_2(k)$,  
$k\in\overline{\mathbb{C}^{-}}$.
\item
$a_{1}(k)a_{2}(k)+\sigma b(k)\overline{b(-
\bar{k})}=1$, $k\in{\mathbb R}\setminus\{0\}$ (follows from $\det S(k)=1$).
\item As $k\to 0$, $a_1(k)=\sigma\frac{A^2a_2(0)}{4k^2}+O\left(\frac{1}{k}\right)$ and 
$b(k)=\sigma\frac{Aa_2(0)}{2ik}+O\left(1\right)$.
\end{enumerate}
\end{proposition}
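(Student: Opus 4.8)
The plan is to read each of the five items directly off the determinant representations (\ref{sd}) for $b$ and $a_j$, feeding in the structural facts already recorded in Proposition \ref{psi-prop}: the column-wise analytic continuation (item (i)), the normalization $\det\Psi_j=1$ (item (ii)), the symmetry (\ref{psi-sym}) (item (iii)), and the small-$k$ expansions (\ref{add-symm}) (item (iv)). Thus the proposition is essentially a matter of propagating those facts through the bilinear (determinant) formulas, with genuine work only at $k=0$.

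First I would dispose of items 1, 2 and 4, which are structural. For item 1, recall from Proposition \ref{psi-prop}(i) and the column convention in (\ref{6}) that $\Psi_1^{(1)}$ and $\Psi_2^{(2)}$ extend analytically to $\mathbb{C}^+$, while $\Psi_1^{(2)}$ and $\Psi_2^{(1)}$ extend to $\mathbb{C}^-$. Since $a_1=\det(\Psi_1^{(1)},\Psi_2^{(2)})$ pairs two columns analytic in $\mathbb{C}^+$ and $a_2=\det(\Psi_2^{(1)},\Psi_1^{(2)})$ pairs two columns analytic in $\mathbb{C}^-$, each $a_j$ is analytic in the corresponding open half-plane; continuity up to $\mathbb{R}\setminus\{0\}$ follows from the uniform (on compact subsets avoiding $0$) convergence of the Neumann series for (\ref{6}), guaranteed by the assumed fast decay of $U-U_\pm$. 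For item 2, inserting the large-$k$ behavior $\Psi_j(0,0,k)=I+O(1/k)$ (a consequence of $N_\pm(k)\to I$ and the standard $1/k$-estimate of the Volterra series) into (\ref{sd}) gives, for the leading terms, $\det\bigl(\binom{1}{0},\binom{0}{1}\bigr)=1$ for each $a_j$ and $\det\bigl(\binom{1}{0},\binom{1}{0}\bigr)=0$ for $b$, whence $a_j=1+O(1/k)$ and $b=O(1/k)$. For item 4, I would use $S(k)=\Psi_2^{-1}(0,0,k)\Psi_1(0,0,k)$ (or (\ref{9})) together with $\det\Psi_j=1$ from Proposition \ref{psi-prop}(ii) and $\det e^{\mp(ikx+2ik^2t)\sigma_3}=1$ to conclude $\det S(k)=1$; expanding $\det S$ with the displayed entries of $S(k)$ yields $a_1a_2+\sigma b(k)\overline{b(-k)}=1$, and on $\mathbb{R}$ one has $-\bar k=-k$, giving the stated form.

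For item 3 I would take the real-axis symmetry $\overline{a_j(-k)}=a_j(k)$ — already noted as a consequence of (\ref{psi-sym}) — and extend it into the half-plane by analytic continuation. The map $k\mapsto-\bar k$ sends $\mathbb{C}^+$ onto itself, so $g(k):=\overline{a_1(-\bar k)}$ is analytic in $\mathbb{C}^+$ by the Schwarz reflection construction; it agrees with the analytic function $a_1(k)$ on $\mathbb{R}\setminus\{0\}$, hence coincides with it throughout $\mathbb{C}^+$ by uniqueness of analytic continuation, and likewise for $a_2$ in $\mathbb{C}^-$.

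The main work, and the only nonroutine step, is item 5, which quantifies the strong singularity at $k=0$ emphasized in the paper. Here I would substitute the expansions (\ref{add-symm}) at $x=t=0$ (writing $v_j=v_j(0,0)$) into (\ref{sd}). Since $\Psi_1^{(1)}$ and $\Psi_2^{(2)}$ each carry a simple pole $\propto 1/k$, with residue vectors $r_1=(v_1,v_2)^{T}$ and $r_2=-(\sigma\overline{v_2},\overline{v_1})^{T}$, the determinant $a_1$ acquires a double pole whose $1/k^2$ coefficient is $\det(r_1,r_2)=\sigma|v_2|^2-|v_1|^2$; meanwhile the regular columns $\Psi_2^{(1)}$, $\Psi_1^{(2)}$ give the finite value $a_2(0)=\tfrac{4\sigma}{A^2}\bigl(\sigma|v_2|^2-|v_1|^2\bigr)$, and comparing the two produces $a_1(k)=\sigma\tfrac{A^2a_2(0)}{4k^2}+O(1/k)$. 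The same substitution in $b=\det(\Psi_2^{(1)},\Psi_1^{(1)})$ pairs the regular column $-\tfrac{2i}{A}(\sigma\overline{v_2},\overline{v_1})^{T}+O(k)$ with the polar column, yielding a simple pole with coefficient $-\tfrac{2i}{A}\bigl(\sigma|v_2|^2-|v_1|^2\bigr)=\sigma\tfrac{Aa_2(0)}{2i}$, i.e. $b(k)=\sigma\tfrac{Aa_2(0)}{2ik}+O(1)$. The only care needed is the bookkeeping of which coefficient vectors are proportional — exactly the content of $\binom{\tilde v_1}{\tilde v_2}=\tfrac{2i\sigma}{A}\binom{v_1}{v_2}$ derived in Proposition \ref{psi-prop} — and checking that the $\sigma^2=1$ simplifications reproduce the precise constants. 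I expect this final verification, rather than any conceptual difficulty, to be where the bulk of the computation lies.
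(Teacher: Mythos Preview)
Your proposal is correct and matches the paper's own treatment: the paper does not supply a detailed proof but simply refers to \cite{RSs} and notes that item 5 follows from (\ref{add-symm}), which is precisely the substitution-into-determinants computation you carry out. Your bookkeeping for item 5 (expressing $\sigma|v_2|^2-|v_1|^2$ via $a_2(0)$ and feeding that into the leading coefficients of $a_1$ and $b$) is exactly what the paper's hint points to, and your arguments for items 1--4 are the standard ones the reference implicitly invokes.
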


\subsection{Spectral functions for the ``shifted step'' initial data}

Henceforth, we deal with the defocusing NNLS equation (making comparisons, if appropriate,
with the case of 
 the focusing NNLS equation). Analytic considerations in the case
of pure step initial data (\ref{shifted-step}) presented below 
will guide us in establishing the 
general framework suitable for the asymptotic analysis.

The spectral functions associated with the  initial data $q_0(x)=q_{R,A}(x)$ 
are given explicitly by 
\begin{subequations}
\label{spf}
\begin{align}
&a_1(k)=1- \frac{A^2}{4k^2}e^{4ikR},\\
\label{a_2_zeros}
&a_2(k)=1,\\
&b(k)=-\frac{A}{2ik}e^{2ikR}.
\end{align}
\end{subequations}
Indeed, the scattering matrix $S(k)$ can be obtained from (\ref{9}) taking $x=-R$ and $t=0$:
\begin{equation}\label{scatt-st}
S(k)=e^{-ikR\sigma_3}\Psi_2^{-1}(-R,0,k)\Psi_1(-R,0,k)e^{ikR\sigma_3}.
\end{equation}
It follows from  (\ref{6}) at $t=0$ that
\begin{subequations}
\label{psiR}
\begin{align}
&\Psi_1(-R,0,k)=N_-(k),\\
&\Psi_2(-R,0,k)=N_+(k)-\int^R_{-R}G_+(-R,y,0,k)
\begin{pmatrix}
0& -A\\
0& 0
\end{pmatrix}
\Psi_2(y,0,k) e^{-ik(R+y)\sigma_3}\,dy,
\end{align}
\end{subequations}
where 
$\Psi_2(x,0,k)$ for $x\in\left[-R,R\right]$ solves the  integral equation
\begin{equation}\label{int-psi2}
\Psi_2(x,0,k)=N_+(k)-\int^R_x G_+(x,y,0,k)
\begin{pmatrix}
0& -A\\
0& 0
\end{pmatrix}
\Psi_2(y,0,k)
e^{ik(x-y)\sigma_3}\,dy,\quad x\in[-R, R].
\end{equation}
From the definition of $G_+$ (see (\ref{CG})) it follows that
\begin{equation*}
G_+(x,y,0,k)=
\begin{pmatrix}
e^{-ik(x-y)}& \frac{A}{2ik}\left(e^{ik(x-y)}-e^{-ik(x-y)}\right)\\
0& e^{ik(x-y)}
\end{pmatrix},
\end{equation*}
and then direct calculations show that the solution of the equation (\ref{int-psi2}) is as follows:
\begin{equation}\label{psi2R}
\Psi_2(x,0,k)=
\begin{pmatrix}
1 & \frac{A}{2ik}e^{2ik(R-x)}\\
0 & 1
\end{pmatrix},\quad x\in\left[-R,R\right].
\end{equation}
Substituting (\ref{psiR}) and (\ref{psi2R}) into (\ref{scatt-st}) we arrive at (\ref{spf}).

Now let us analyze the locations of zeros of $a_1(k)$ in $\overline{{\mathbb{C}^{+}}}$ and the  behavior of its argument for $k\in\mathbb{R}$.

\begin{proposition}\label{a_1ss}

\begin{enumerate}[(i)]
\item For $\frac{(n-1)\pi}{A}<R<\frac{n\pi}{A}$, $n\in\mathbb{N}$, $a_1(k)$ has the following properties:
\begin{itemize}
\item 
$a_1(k)$ has $2n$ simple zeros in $\overline{\mathbb{C}^{+}}$:
$\{p_j, -\overline{p}_j\}_{j=1}^{n}$. 
Here $\{\Re p_j\}_{j=1}^{n}$ are the ordered set of solutions of the equations 
\begin{equation}
\label{transc2}
k=\pm\frac{A}{2}\cos (2kR) e^{-2kR\tan (2kR)},
\end{equation}
considered for $k<0$, and $\Im p_j$ and $\Re p_j$ are related by
\begin{equation}\label{Imp_j}
\Im p_j=\Re p_j\tan (2\Re p_jR),\quad j=\overline{1,n}.
\end{equation}
Notice that 
\begin{equation}
\label{interv}
\Re p_j\in\left(-\frac{(2j-1)\pi}{4R},-\frac{(j-1)\pi}{2R}\right),
\quad j=\overline{1,n}
\end{equation}
and that behavior of the real and imaginary parts of the zeros $p_j$ as $R$ increases is similar to the case of the focusing NNLS equation \cite{RSfs}.
\item
Define $\omega_j$, $j=\overline{0,n}$ as follows: $\omega_0=0$, $\omega_j=\frac{j\pi}{2R}$ for $j=\overline{1,n-1}$, and $\omega_{n}=\infty$. Then  
\begin{subequations}\label{winding}
\begin{align}
&\int_{-\infty}^{-\omega_{n-j}}d\arg a_1(k) = (2j-1)\pi,\quad j=\overline{1,n-1},\\
&\int_{-\infty}^{-\xi}d\arg a_1(k)\in((2j-1)\pi,(2j+1)\pi),\quad -\omega_{n-j}<-\xi<-\omega_{n-j-1},\quad j=\overline{0,n-1}.
\end{align}
\end{subequations}

\end{itemize}

\item If $R=\frac{n\pi}{A}$ for some $n\in\mathbb{N}\cup\{0\}$, then  
$a_1(k)$ has $2n+2$ simple zeros in $\overline{\mathbb{C}^{+}}$ at 
$\{\pm\frac{A}{2}$,  $\{p_j, -\overline{p}_j\}_{j=1}^{n}\}$, where   $\Re p_j$ ($j=\overline{1,n}$) are the solutions of (\ref{transc2}), and $\Im p_j$ are determined by (\ref{Imp_j}).
\end{enumerate}
\end{proposition}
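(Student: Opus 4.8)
The plan is to reduce the whole statement to the explicit equation $a_1(k)=0\iff 2k=\pm Ae^{2ikR}$ (from (\ref{spf})) and to a real‑variable analysis of it. First I would put $k=\alpha+i\beta$ and separate real and imaginary parts in $2k=\pm Ae^{2ikR}$: the resulting system forces $\cos 2\alpha R\neq 0$ at every zero, whence $\beta=\alpha\tan 2\alpha R$ (this is (\ref{Imp_j})) and, substituting back, the transcendental equation (\ref{transc2}). A crucial a priori bound comes from setting $\mu:=2ikR$, so that the equation becomes $\mu e^{-\mu}=\mp iAR$, hence $|\mu|=ARe^{\operatorname{Re}\mu}$; since $\operatorname{Re}\mu=-2R\operatorname{Im}k\le 0$ for $k\in\overline{\mathbb C^{+}}$, every zero lies in $\{|k|\le A/2\}$, with equality only for real $k$. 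A zero on $\mathbb R$ must therefore be $k=\pm A/2$, and substituting shows this happens precisely when $e^{2iAR}=\pm 1$, i.e. exactly for $R\in\frac{\pi}{A}\mathbb Z$ — this is the dichotomy between (i) and (ii). (There are no zeros on $i\mathbb R$: there $2k$ is purely imaginary while $\pm Ae^{2ikR}$ is real and nonzero.)

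For the count in case (i) I use the symmetry $\overline{a_1(-\bar k)}=a_1(k)$, which pairs zeros $p\leftrightarrow -\bar p$, so it suffices to count zeros with $\operatorname{Re}p<0$. Such a zero corresponds to a root $\alpha<0$ of (\ref{transc2}) with $\alpha\tan 2\alpha R>0$, i.e. $\tan 2\alpha R<0$, which confines $\alpha$ to the union of the intervals $I_j=\bigl(-\tfrac{(2j-1)\pi}{4R},-\tfrac{(j-1)\pi}{2R}\bigr)$, $j\ge 1$ (running from a pole of $\tan 2\alpha R$ to a zero of it). On $I_j$ the sign of $\cos 2\alpha R$ is constant, so exactly one of the two equations in (\ref{transc2}) has a negative right‑hand side there, compatible with $\alpha<0$; writing $h_j(\alpha)=\alpha+\tfrac A2|\cos 2\alpha R|\,e^{-2\alpha R\tan 2\alpha R}$ for the corresponding zero‑level function, one checks $h_j\to-\tfrac{(2j-1)\pi}{4R}<0$ at the left (pole) endpoint and $h_j\to\tfrac A2-\tfrac{(j-1)\pi}{2R}$ at the right endpoint, the latter being positive iff $R>\tfrac{(j-1)\pi}{A}$. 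Hence for $\tfrac{(n-1)\pi}{A}<R<\tfrac{n\pi}{A}$ there is a sign change, and so a root, in $I_j$ for $j=\overline{1,n}$ and none for $j>n$; a monotonicity (convexity) estimate for $h_j$ then gives exactly one root in each $I_j$, $j\le n$, and none for $j>n$, so $a_1$ has precisely $2n$ zeros in $\overline{\mathbb C^{+}}$, localized as in (\ref{interv}). For case (ii), $R=\tfrac{n\pi}{A}$, direct substitution gives $a_1(\pm A/2)=1-e^{\pm 2iAR}=0$, these are simple zeros ($a_1'(\pm A/2)\neq 0$), and the remaining $2n$ zeros $\{p_j,-\bar p_j\}_1^{n}$ are the limiting configuration of case (i) as $R\uparrow\tfrac{n\pi}{A}$ (their defining relations persist, while the ``missing'' pair of $I_{n+1}$ degenerates exactly onto the real zero $-A/2$, where $h_{n+1}$ vanishes at the right endpoint).

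For the winding (\ref{winding}) I would track the curve $k\mapsto a_1(k)=1-\tfrac{A^2}{4k^2}e^{4ikR}$, $k<0$, through its real‑axis crossings. Here $\operatorname{Im}a_1(k)=-\tfrac{A^2}{4k^2}\sin 4kR$ vanishes exactly at $k_m:=-\tfrac{m\pi}{4R}$, $m\in\mathbb N$ (note $k_{2(n-j)}=-\omega_{n-j}$), where $a_1(k_m)=1-\tfrac{A^2}{4k_m^2}(-1)^m$ is real; a short computation (using $AR>(n-1)\pi$) gives $\operatorname{sgn}a_1(k_m)=(-1)^{m+1}$ for $0\le m\le 2n-1$ and $=+1$ for $m\ge 2n-1$, with $a_1(0^{-})<0$ playing the role of ``$k_0$''. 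On each $(k_{m+1},k_m)$ the curve stays strictly in the open upper half‑plane if $m$ is even and in the open lower half‑plane if $m$ is odd, so the increment of the continuously chosen $\arg a_1$ over $(k_{m+1},k_m)$ equals $0$ when $a_1$ has equal signs at the endpoints and $+\pi$ when the signs differ (the direction of the half‑turn being forced by the half‑plane, and always coming out positive). Since $\arg a_1\to 0$ as $k\to-\infty$, summing these increments ($0$ for $m\ge 2n-1$, $+\pi$ for $0\le m\le 2n-2$) yields $\int_{-\infty}^{-\omega_{n-j}}d\arg a_1=(2j-1)\pi$, $j=\overline{1,n-1}$, and the strict inclusion $\int_{-\infty}^{-\xi}d\arg a_1\in((2j-1)\pi,(2j+1)\pi)$ on $-\omega_{n-j}<-\xi<-\omega_{n-j-1}$; the total value $(2n-1)\pi$ is cross‑checked by the argument principle on $\mathbb C^{+}$ minus a small half‑disk at $0$, using $\overline{a_1(-\bar k)}=a_1(k)$ and $a_1\to 1$ at infinity (this also provides an alternative proof of the $2n$‑zero count).

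The step I expect to be the main obstacle is the exactness of the zero count — proving $h_j$ has a unique root in $I_j$ for $j\le n$ and none for $j>n$, i.e. ruling out spurious extra pairs; this requires a genuine monotonicity/convexity estimate for the transcendental function (equivalently, one must derive the winding above independently and then feed it into the argument principle — one of the two analyses has to be carried out from scratch). A secondary, more mechanical difficulty is the branch bookkeeping of $\arg a_1$ in the winding computation, which the half‑plane confinement between consecutive real‑axis crossings is designed to make tractable.
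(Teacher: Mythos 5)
Your proposal is correct and takes essentially the same route as the paper: reduce $a_1(k)=0$ to the real system, obtain $\Im p=\Re p\tan(2\Re p R)$ and (\ref{transc2}), localize $\Re p_j$ in the intervals (\ref{interv}) and count roots by a monotone-crossing argument, detect the real zeros $\pm\frac{A}{2}$ exactly when $AR\in\pi\mathbb{Z}$ (your intermediate condition should read $e^{\pm iAR}=\pm1$, i.e.\ $e^{2iAR}=1$, but your conclusion $R\in\frac{\pi}{A}\mathbb{Z}$ is the right one), and obtain (\ref{winding}) from the signs of $a_1$ at the points where it is real, which is a more carefully executed version of the paper's sign estimates at $k_{(m)}=-\frac{m\pi}{2R}$. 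The step you flag as the main obstacle is in fact immediate and is exactly the paper's one-line monotonicity claim: on each $I_j$ both $\bigl|\cos 2\alpha R\bigr|$ and $e^{-2\alpha R\tan 2\alpha R}$ are positive and strictly increasing in $\alpha$, so $h_j$ is strictly increasing, giving uniqueness of the root for $j\le n$ and nonexistence for $j>n$.
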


\begin{proof}

 Observe that the equation $a_1(k)=0$ is equivalent to the system
\begin{equation}
\label{syst}
\begin{cases}
k_1=\pm\frac{A}{2}\cos(2k_1R)e^{-2k_2R}\\
k_2=\pm\frac{A}{2}\sin(2k_1R)e^{-2k_2R}
\end{cases},
\end{equation}
where $k=k_1+ik_2$, $k\in\overline{\mathbb{C}^{+}}\setminus\{0\}$. Due to the symmetry relation $a_1(k)=\overline{a_1(-\bar{k})}$ it is sufficient to consider (\ref{syst}) 
for $k_1\geq0$ only.

(i) For $k_1=0$, the system (\ref{syst}) clearly has no solutions, so $a_1(k)$ has no purely imaginary zeros (notice that in the focusing case, $a_1(k)$ has one simple purely imaginary zero \cite{RSfs}).

(ii)  Assuming $k_2=0$, the second equation in (\ref{syst}) implies that $k_1$ must be equal to $\frac{\pi n}{2R}$ with some $n\in\mathbb{N}$.
But then,  from the first equation in (\ref{syst}) we conclude that $k_1=\frac{A}{2}$. Therefore, $k=\pm\frac{A}{2}$ are simple zeros of $a_1(k)$ if and only if there exists $n\in\mathbb{N}$ such that 
$\pi n=AR$. Notice that in the case $R=0$, the spectral function $a_1(k)$ has exactly two simple zeros,  $\frac{A}{2}$ and $-\frac{A}{2}$.

(iii) Now, let's look at the location of  zeros of $a_1(k)$ in the open quarter plane $k_1>0$, $k_2>0$. Dividing the  equations in (\ref{syst}) sidewise we arrive at (cf. (\ref{Imp_j}))
\begin{equation}\label{cot}
k_2=k_1\tan(2k_1R),\quad k_1\neq\frac{\pi(2n+1)}{4R},\,
n\in\mathbb{N},
\end{equation}
from which  we conclude (cf. (\ref{interv})) that 
\begin{equation}\label{domain}
k_1\in\left(\frac{(n-1)\pi}{2R},\frac{(2n-1)\pi}{4R}\right),\quad n\in\mathbb{N}.
\end{equation}
Substituting (\ref{cot}) into the first equation in (\ref{syst}) and taking into account the sign of $\cos(2k_1R)$ for $k_1$ satisfying (\ref{domain}), we obtain the equations 
for $k_1$:
\begin{subequations}\label{k_1}
\begin{equation}\label{k1-1}
k_1=\frac{A}{2}\cos(2k_1R)e^{-2k_1R\tan(2k_1R)}\quad \text{for}\ 
k_1\in\left(\frac{(n-1)\pi}{R},\frac{(4n-3)\pi}{4R}\right),\quad n\in\mathbb{N},
\end{equation}
or 
\begin{equation}\label{k1-2}
k_1=-\frac{A}{2}\cos(2k_1R)e^{-2k_1R\tan(2k_1R)} \quad \text{for}\ 
k_1\in\left(\frac{(2n-1)\pi}{2R},\frac{(4n-1)\pi}{4R}\right),\quad n\in\mathbb{N}.
\end{equation}
\end{subequations}
Since the r.h.s. of (\ref{k1-1}) and (\ref{k1-2}) monotonically decrease 
w.r.t. $k_1$
in the  corresponding intervals, it follows that for 
$\frac{(n-1)\pi}{A}<R\leq\frac{n\pi}{A}$ equations (\ref{k_1}) 
have $n$ simple solutions $\{k_{1,j}\}_{j=1}^{n}$ in the quarter plane $k_1>0$, $k_2>0$ such that  $k_{1,j}\in\left(\frac{(j-1)\pi}{2R},\frac{(2j-1)\pi}{4R}\right)$, 
$j=\overline{1,n}$ (cf.  (\ref{interv})).

Concerning the   winding properties of $\arg a_1(k)$, 
the estimates (for $\frac{(n-1)\pi}{A}<R<\frac{n\pi}{A}$)
\begin{subequations}
\begin{align}
&\frac{A^2}{4k_{(m)}^2}e^{4ik_{(m)}R}<1\quad \text{for}\ 
k_{(m)}=-\frac{m\pi}{2R},\quad
m\in\mathbb{N},\quad
m\geq n,\\
&\frac{A^2}{4k_{(m)}^2}e^{4ik_{(m)}R}>1 \quad \text{for}\ 
k_{(m)}=-\frac{m\pi}{2R},\quad
m\in\mathbb{N},\quad
m< n,
\end{align}
\end{subequations}
yield (\ref{winding}).
\end{proof}
\begin{remark}
\label{cdnls}
Considering the pure step initial data $q_{R,A}$ as varying with $R$ (for a fixed $A$),
the values $R=\frac{n\pi}{A}$, $n=0,1,2,\dots$ turn to be the bifurcation points: when $R$ is passing any of these values,  $a_1(k)$ acquires an additional pair of zeros 
at $k=\pm\frac{A}{2}$
(cf. \cite{BP}, Section 4.1, where the box-type piecewise-constant initial data
for the defocusing NLS equation with nonzero boundary conditions at infinity 
are considered illustrating the bifurcation of discrete eigenvalues).
\end{remark}
\subsection{The basic Riemann-Hilbert problem and inverse scattering}
\label{bRH}

One of the main advantages of  the Riemann-Hilbert approach in the Inverse Scattering Transform method 
is that it is highly efficient  in the asymptotic analysis.
Recall that  the  Riemann-Hilbert (RH) problem 
(as widely used in applications to integrable systems)
consists in finding an $n\times n$-valued piece-wise meromorphic function that satisfies a prescribed jump condition across a contour in the complex plane and prescribed conditions at singular points (if any). The jump matrix for  RH problems associated with   initial (and initial-boundary) value problems for integrable systems are usually oscillatory with respect to a large parameter (in our case, time); 
in  
treating (asymptotically) these problems,  
 the so-called nonlinear steepest decent  method (Deift and Zhou method \cite{DZ}) has
proved to be extremely efficient. 

The construction of the RH problem for an integrable system is usually based
(at least in the case when the  differential equations in the Lax pair representation
are of the second order), 
 on analytic properties of the Jost solutions $\Phi_j(x,t,k)$ and, correspondingly,
the functions $\Psi_j(x,t,k)$. Similarly
to the case of the focusing NNLS equation \cite{RSs, RSfs},
we
 define the $2\times 2$-valued, piece-wise meromorphic (relative to the real line) function $M(x,t,k)$ by
\begin{equation}
\label{DM}
M(x,t,k)=
\left\{
\begin{array}{lcl}
\left(\frac{\Psi_1^{(1)}(x,t,k)}{a_{1}(k)},\Psi_2^{(2)}(x,t,k)\right),\quad k\in\mathbb{C}^+,\\
\left(\Psi_2^{(1)}(x,t,k),\frac{\Psi_1^{(2)}(x,t,k)}{a_{2}(k)}\right),\quad k\in\mathbb{C}^-.\\
\end{array}
\right.
\end{equation}
The scattering relation (\ref{9}) implies that the boundary values 
$M_\pm(x,t,k) = \underset{k'\to k, k'\in {\mathbb C}^\pm}{\lim} M(x,t,k')$, $k\in\mathbb R$ (we take the non-tangential limits)
 satisfy the  jump condition
\begin{equation}\label{jr}
M_+(x,t,k)=M_-(x,t,k)J(x,t,k),\qquad k\in\mathbb{R}\setminus\{0\}
\end{equation}
with the  jump matrix
\begin{equation}\label{jump}
J(x,t,k)=
\begin{pmatrix}
1-r_{1}(k)r_{2}(k)& -r_{2}(k)e^{-2ikx-4ik^2t}\\
r_1(k)e^{2ikx+4ik^2t}& 1
\end{pmatrix},
\end{equation}
where the reflection coefficients $r_1(k)$, $j=1,2$ are defined by
\begin{equation}\label{r12}
r_1(k):=\frac{b(k)}{a_1(k)},\quad r_2(k):=\frac{\overline{b(-k)}}{a_2(k)}.
\end{equation}

Observe that from the determinant relation (see item 4 with $\sigma=-1$ in Proposition \ref{properties}) we have
\begin{equation}\label{scalrel}
1-r_1(k)r_2(k)=\frac{1}{a_1(k)a_2(k)}.
\end{equation}
Moreover, 
\begin{equation}
M(x,t,k)\to I,\qquad k\to\infty,
\end{equation}
where  $I$ is the $2\times2$ identity matrix.

Taking into account the singularities of 
$\Psi_j(x,t,k)$, $j=1,2$ and  $a_1(k)$  at  $k=0$  (see Propositions \ref{psi-prop} and \ref{properties}), the function $M(x,t,k)$ has the following behavior as $k\to0$:
\begin{subequations}\label{z}
\begin{align}
\label{+i0}
& M(x,t,k)=
\begin{pmatrix}
-\frac{4}{A^2a_2(0)}v_1(x,t)& \overline{v_2}(-x,t)\\
-\frac{4}{A^2a_2(0)}v_2(x,t)& -\overline{v_1}(-x,t)
\end{pmatrix}
(I+O(k))
\begin{pmatrix}
k& 0\\
0& \frac{1}{k}
\end{pmatrix},& k\rightarrow 0,\quad k\in \mathbb{C}^+,\\
\label{-i0}
& M(x,t,k)=\frac{2i}{A}
\begin{pmatrix}
\overline{v_2}(-x,t)& -\frac{v_1(x,t)}{a_2(0)}\\
-\overline{v_1}(-x,t)& -\frac{v_2(x,t)}{a_2(0)}
\end{pmatrix}
+O(k), & k\rightarrow 0, \quad k\in \mathbb{C}^-,
\end{align}
\end{subequations}
where $v_j(x,t)$, $j=1,2$ are some functions.

Similarly to the focusing NNLS equation \cite{RSfs}, the properties of $a_j(k)$, $j=1,2$ in the case of the ``shifted step'' initial data (see Proposition \ref{a_1ss}) guide us to make  assumptions on the spectral functions $a_j(k)$, $j=1,2$ 
associated with 
 initial data  satisfying (\ref{shifted-step-gen}). We emphasize that these assumptions differ from those made in the focusing case 
(particularly, the order of $\Re p_j$ and $\omega_j$ is different, see (\ref{order}) below), which significantly  affects  the resulting asymptotic formulas.
  
\begin{description}
\item [Assumptions:] 
\item{\textbf{(a)}} 
$a_1(k)$ has $2n$, $n\in\mathbb{N}$, simple zeros 
in $\overline{{\mathbb C}^+}\setminus\{0\}$,
 $\{p_j\}_{j=1}^{n}$ and  $\{-\overline{p}_j\}_{j=1}^{n}$, with $\Im p_j>0$ and $\Re p_n<\dots<\Re p_1<0$. 
\item{\textbf{{(b})}}
$a_2(k)$ has no zeros in $\overline{\mathbb{C}^-}$.
\item{\textbf{(c)}} 
There exist numbers $\omega_m>0$, $m=\overline{1,n-1}$ such that
\begin{equation}\label{order}
-\infty<\Re p_{n}<-\omega_{n-1}<\Re p_{n-1}<-\omega_{n-2}<\dots<\Re p_1<0,
\end{equation}
\begin{subequations}\label{windingRH}
\begin{equation}
\int_{-\infty}^{-\omega_{n-m}}d\arg\left(a_1(k)a_2(k)\right) = (2m-1)\pi,
	\quad m=\overline{1,n-1},
\end{equation}
and 
\begin{equation}\label{xiint}
\int_{-\infty}^{-\xi}d\arg\left(a_1(k)a_2(k)\right)
\in((2m-1)\pi,(2m+1)\pi),\quad -\omega_{n-m}<-\xi<-\omega_{n-m-1},\quad m=\overline{0,n-1}
\end{equation}
(here we adopt the notations $\omega_0=0$ and  $\omega_n=+\infty$).
\end{subequations}
\end{description}

The construction of  $M$ implies that 
at the zeros of $a_1(k)$, $M(x,t,k)$
satisfies the following residue conditions:
\begin{subequations}\label{resin}
\begin{align}
\underset{k=p_j}{\operatorname{Res}} M^{(1)}(x,t,k)&=
\frac{\eta_j}{\dot a_1(p_j)}
e^{2ip_jx+4ip_j^2t}M^{(2)}(x,t,p_j),\quad j=\overline{1,n},\\
\underset{k=-\overline{p}_j}{\operatorname{Res}} M^{(1)}(x,t,k)&=
\frac{1}{\bar\eta_j\dot a_1(-\overline{p}_j)}
e^{-2i\overline{p}_jx+4i\overline{p}_j^2t}M^{(2)}(x,t,-\overline{p}_j),
\quad j=\overline{1,n}.
\end{align}
\end{subequations}
Here $\eta_j$, $j=\overline{1,n}$ are constants 
determined by the initial data through 
 $\Psi_1^{(1)}(0,0,p_j)=\eta_j\Psi_2^{(2)}(0,0,p_j)$.

Basing on the analytic properties of $M$ presented above, we observe that 
we can \emph{characterize} $M$ as the solution of a Riemann--Hilbert
problem, with data uniquely determined by the initial data $q_0(x)$, 
in terms of the associated spectral data.

\begin{description}
\item [Basic Riemann--Hilbert Problem:] 
Given $b(k)$, $k\in{\mathbb R}$ and $a_j(k)$, $j=1,2$ which satisfy  properties 1-5 
in Proposition \ref{properties} and  assumptions (a)-(c) above, 
and constants $\eta_j$, $j=\overline{1,n}$, 
find the $2\times 2$-valued, piece-wise (relative to 
$\mathbb{R}$)
meromorphic in $k$ function $M(x,t,k)$ satisfying the following conditions:
\item [(1)] The jump condition:
\begin{equation}\label{jRH}
M_+(x,t,k)=M_-(x,t,k)J(x,t,k),\qquad k\in\mathbb{R}\setminus\{0\}
\end{equation}
with the  jump matrix $J(x,t,k)$ given by (\ref{jump}), where $r_j(k)$, $j=1,2$
are determined in terns of  $b(k)$ and $a_j(k)$, $j=1,2$
by (\ref{r12}).
\item [(2)] The residue conditions (\ref{resin}).
\item [(3)] The \textit{pseudo-residue} conditions (\ref{z}) at $k=0$, where $v_j(x,t)$, $j=1,2$ are not prescribed.
\item[(4)] The normalization condition at $k=\infty$:
$$
M(x,t,k)=I+O(k^{-1})\quad \mbox{uniformly as } k\to\infty.
$$
\end{description}

Assuming that the Riemann-Hilbert problem (1)--(4) has a solution $M(x,t,k)$, the solution of the initial value problem (\ref{1}), (\ref{2}) can be expressed as follows:
\begin{equation}\label{sol}
q(x,t)=2i\lim_{k\to\infty}kM_{12}(x,t,k)
\end{equation}
or
\begin{equation}\label{sol1}
q(-x,t)=-2i\lim_{k\to\infty}k\overline{M_{21}(x,t,k)}.
\end{equation}


\begin{remark}
The solution of Basic Riemann-Hilbert Problem is unique. Indeed, let $M$ and $\tilde{M}$ be two solutions of the problem, then $M\tilde{M}^{-1}$ has no jump across $\mathbb{R}\setminus\{0\}$ and it is bounded at $k=0$ (which can be seen from (\ref{z})), 
$k=p_j$ and $k=-\bar{p}_j$. Taking into account the normalization condition \textbf{(4)}, by the Liouville theorem it follows that $M\tilde{M}^{-1}\equiv I$.
\end{remark}
\begin{remark}\label{rem-sym}
(\ref{sol}) and (\ref{sol1}) imply that 
the solution of problem 
  (\ref{1}), (\ref{2}) for all $x\in (-\infty,\infty)$
	can be expressed in terms of the solutions of the RH problems evaluated for
	$x\ge 0 $ only.
\end{remark}
\begin{remark}
In contrast with local integrable equations, where zeros of certain spectral functions
(analogues of $a_j(k)$, $j=1,2$) are associated with solitons traveling on a prescribed background
(even in the cases when the background is nonzero), for nonlocal equations, certain number of zeros of 
$a_j(k)$, $j=1,2$ is always associated with the background itself.
In the present paper, we restrict ourselves to the case without additional zeros 
(associated with the deflection of $q_0(x)$ from the background (\ref{shifted-step}).
\end{remark}

\section{The long-time asymptotics}\label{asympt}

In the present section we study the long-time behavior of the solution of the 
initial value  problem 
(\ref{1}), (\ref{2}) under Assumptions \textbf{(a)-(c)}. For this purpose,
 we adapt the nonlinear steepest-decent method \cite{DZ} to  Basic Riemann-Hilbert Problem \textbf{(1)--(4)} (see Section \ref{bRH}).

\subsection{Jump factorizations}

Introducing the phase function
\begin{equation}\label{theta}
\theta(k,\xi)=4k\xi+2k^2
\end{equation}
in terms of the slow variable  $\xi=\frac{x}{4t}$,  the jump matrix (\ref{jump}) 
admits the triangular factorizations  of
two types \cite{RSs, RSfs}:
\begin{subequations}\label{tr}
\begin{align}
\label{tr1}
J(x,t,k)&=
\begin{pmatrix}
1& 0\\
\frac{r_1(k)}{1- r_1(k)r_2(k)}e^{2it\theta}& 1\\
\end{pmatrix}
\begin{pmatrix}
1- r_1(k)r_2(k)& 0\\
0& \frac{1}{1- r_1(k)r_2(k)}\\
\end{pmatrix}
\begin{pmatrix}
1& \frac{-r_2(k)}{1- r_1(k)r_2(k)}e^{-2it\theta}\\
0& 1\\
\end{pmatrix}
\\
\label{tr2}
&=
\begin{pmatrix}
1& -r_2(k)e^{-2it\theta}\\
0& 1\\
\end{pmatrix}
\begin{pmatrix}
1& 0\\
r_1(k)e^{2it\theta}& 1\\
\end{pmatrix}.
\end{align}
\end{subequations}
The idea of the nonlinear steepest descent method is to transform the original RH problem
to such a form, where  the jump matrix converges rapidly to $I$ away from a vicinity of the stationary phase point $k=-\xi$ of $\theta$. 
Since $\theta(k,\xi)$ and its signature table (see Figure \ref{signtable}) are the same as in the case of the local NLS equation,
we can initiate the RH problem transformations similarly to the local case, 
introducing an auxiliary scalar function $\delta(k,\xi)$ in order to 
 get rid of the diagonal factor in (\ref{tr1}). 
This function can be   defined as  the solution of the following scalar RH problem:
\begin{subequations}\label{RHd}
\begin{align}
&\delta_+(k,\xi)=\delta_-(k,\xi)(1-r_1(k)r_2(k)),&&k\in(-\infty,-\xi),\\
&\delta(k,\xi)\rightarrow 1,&&k\rightarrow\infty.
\end{align}
\end{subequations}

Although  problem (\ref{RHd}) seems to be exactly the same as in the case of the local NLS \cite{DIZ}, a principal difference is that the jump $(1-r_1(k)r_2(k))$ 
is, in general,  a complex-valued function, which can lead to a strong singularity 
at $k=-\xi$. 
In order to cope with the similar problem in the focusing case, 
in \cite{RSfs} we introduced a finite number of ``partial functions delta'', which have weak singularities, and proceed with  their  product.  
Here we proceed in a different way,   defining a single  function $\delta(k,\xi)$ as the solution of the scalar RH problem (\ref{RHd}), and then dealing with the strong singularity at $k=-\xi$.

\begin{figure}
\begin{minipage}[h]{0.99\linewidth}
	\centering{\includegraphics[width=0.4\linewidth]{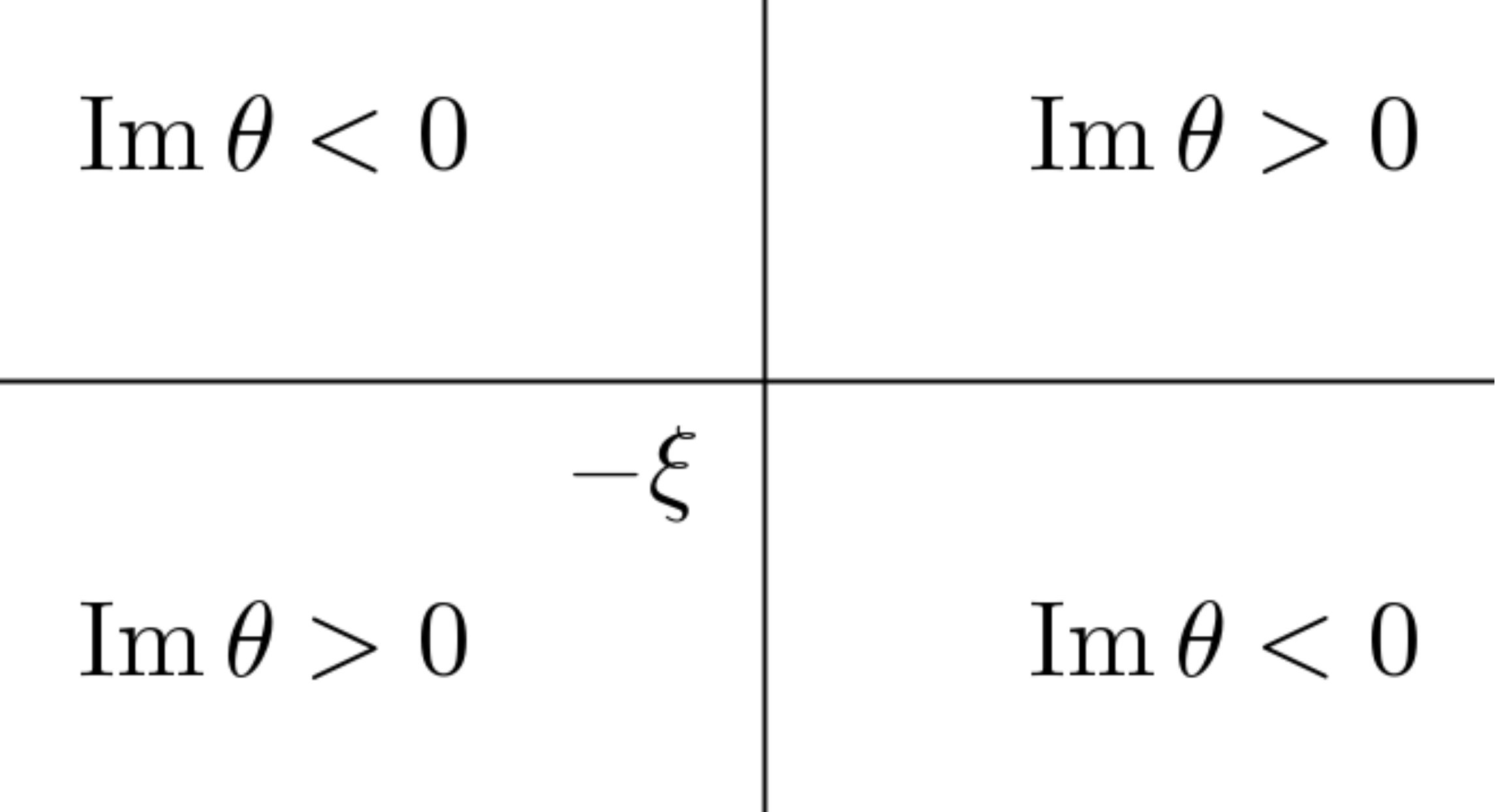}}
	\caption{Signature table}

	\label{signtable}
\end{minipage}
\end{figure}

The function $\delta(k,\xi)$ given by the Cauchy integral
\begin{equation}\label{delta}
\delta(k.\xi)=\exp\left\{
\frac{1}{2\pi i}\int_{-\infty}^{-\xi}
\frac{\ln(1-r_1(\zeta)r_2(\zeta))}{\zeta-k}\,d\zeta
\right\}
\end{equation}
satisfies (\ref{RHd}) and can be written as
\begin{equation}\label{deltasing}
\delta(k,\xi)=(k+\xi)^{i\nu(-\xi)}e^{\chi(k,\xi)},
\end{equation}
where 
\begin{equation}
\chi(k,\xi)=-\frac{1}{2\pi i}\int_{-\infty}^{-\xi}\ln(k-\zeta)
d_{\zeta}(1-r_1(\zeta)r_2(\zeta)),
\end{equation}
and
\begin{equation}
\nu(-\xi)=-\frac{1}{2\pi}\ln|1-r_1(-\xi)r_2(-\xi)|-\frac{i}{2\pi}
\left(
\int_{-\infty}^{-\xi}d\arg(1-r_1(\zeta)r_2(\zeta))
\right).
\end{equation}
Now we notice that in view of (\ref{scalrel}) and (\ref{xiint}) we have:
\begin{subequations}\label{3.8}
\begin{align}
&\Im\nu(-\xi)\in((m-1/2), (m+1/2))\ \ \text{for}\ 
-\xi\in(-\omega_{n-m},-\omega_{n-m-1}),\quad &m=\overline{0,n-1},\\
&\Im\nu(-\omega_{n-m-1})=m+1/2,\quad &m=\overline{0,n-2},
\end{align}
\end{subequations}
which leads to, generally, a strong singularity of $\delta(k,\xi)$, see (\ref{deltasing}).

Introducing
\begin{equation}
\tilde{M}(x,t,k)=M(x,t,k)
\delta^{-\sigma_3}(k,\xi),
\end{equation} 
the function $\tilde{M}(x,t,k)$ satisfies the  jump and norming conditions
\begin{subequations}\label{tildeM}
\begin{align}
\label{14.1}
&\tilde{M}_+(x,t,k)=\tilde{M}_-(x,t,k)\tilde{J}(x,t,k), && k\in\mathbb{R}\setminus\{0\},\\
\label{14.1-norm}
&\tilde{M}(x,t,k)\rightarrow I, &&k\rightarrow\infty
\end{align}
\end{subequations}
with
\begin{equation}
\label{as4}
\tilde{J}(x,t,k)=
\begin{cases}
\begin{pmatrix}
1& 0\\
\frac{r_1(k)\delta_-^{-2}(k,\xi)}{1-r_1(k)r_2(k)}e^{2it\theta}& 1\\
\end{pmatrix}
\begin{pmatrix}
1& -\frac{r_2(k)\delta_+^{2}(k,\xi)}{1-r_1(k)r_2(k)}e^{-2it\theta}\\
0& 1\\
\end{pmatrix},& k\in(-\infty,-\xi),
\\
\begin{pmatrix}
1& -r_2(k)\delta^2(k,\xi)e^{-2it\theta}\\
0& 1\\
\end{pmatrix}
\begin{pmatrix}
1& 0\\
r_1(k)\delta^{-2}(k,\xi)e^{2it\theta}& 1\\
\end{pmatrix},& k\in(-\xi,\infty)\setminus\{0\},
\end{cases}
\end{equation}
 the residue conditions
\begin{subequations}
\begin{align}
&\underset{k=p_j}{\operatorname{Res}}\tilde M^{(1)}(x,t,k)=
\frac{\eta_j}{\dot a_1(p_j)\delta^{2}(p_j,\xi)}
e^{2ip_jx+4ip_j^2t}\tilde M^{(2)}(x,t,p_j),&& j=\overline{1,n},\\
&\underset{k=-\overline{p}_j}{\operatorname{Res}}\tilde M^{(1)}(x,t,k)=
\frac{1}{\bar\eta_j\dot a_1(-\overline{p}_j)
\delta^{2}(-\overline{p}_j,\xi)}
e^{-2i\overline{p}_jx+4i\overline{p}_j^2t}\tilde M^{(2)}(x,t,-\overline{p}_j),
&& j=\overline{1,n},
\end{align}
\end{subequations}
and the pseudo-residue conditions at $k=0$:
\begin{subequations}\label{14}
\begin{align}
\label{14.2}
&\tilde{M}(x,t,k)=
\begin{pmatrix}
-\frac{4v_1(x,t)}{A^2a_2(0)\delta(0,\xi)}& \delta(0,\xi)\overline{v_2}(-x,t)\\
-\frac{4v_2(x,t)}{A^2a_2(0)\delta(0,\xi)}& -\delta(0,\xi)\overline{v_1}(-x,t)
\end{pmatrix}
(I+O(k))
\begin{pmatrix}
k& 0\\
0& \frac{1}{k}
\end{pmatrix},&& k\to 0, \,\, k\in\mathbb{C}^{+},\\
\label{14.3}
&\tilde{M}(x,t,k)=\frac{2i}{A}
\begin{pmatrix}
\frac{\overline{v_2}(-x,t)}{\delta(0,\xi)}& -\delta(0,\xi)\frac{v_1(x,t)}{a_2(0)}\\
\frac{-\overline{v_1}(-x,t)}{\delta(0,\xi)}& -\delta(0,\xi)\frac{v_2(x,t)}{a_2(0)}
\end{pmatrix}
+O(k),&& k\to 0, \,\, k\in\mathbb{C}^{-}.
\end{align}
\end{subequations}
Moreover,  $\tilde{M}(x,t,k)$ is, in general, singular at 
$k =-\xi$:
\begin{equation}\label{p-res-2}
\tilde{M}_{\pm}(x,t,k)=
\left(\tilde{M}_{\pm}(x,t)+O(k+\xi)\right)(k+\xi)^{\Im\nu(-\xi)\sigma_3},\quad
k\to-\xi,
\end{equation}
where $\det\tilde{M}_{\pm}(x,t)=1$ for all $x,t$.

Notice that conditions (\ref{tildeM})--(\ref{p-res-2}) determine a RH problem
whose solution is unique, if exists, for any value of $\Im\nu(-\xi)$. 

\subsection{The RH problem deformations}\label{RHdeform}
The triangular factorizations (\ref{as4}) 
suggest deforming the contour for the 
 Riemann-Hilbert problem to a cross  centered at $k=-\xi$ (see Figure \ref{F1}), so that the (transformed)
jump matrix  converges (as $t\to\infty$)  to the identity matrix 
exponentially fast away from a  neighborhood of $k=-\xi$. 
In general, in order to be able to do this, we have to use analytic 
approximations of the 
 reflection coefficients $r_j(k)$, $j=1,2$ outside the real axis.
In order to   avoid  technical complications related to such approximations
and to keep transparent the 
realization of the main ideas of the asymptotic analysis, 
we assume in what follows that $r_j(k)$, $j=1,2$ are analytic 
at least in a band containing the real axis. This assumption holds, for example, if the initial value $q_0(x)$ is a local (with a finite support) perturbation of the background step function.

Adopting the notations $\hat\Omega_j$, $j=0,\dots,4$ for the sectors as in Figure \ref{F1} (notice that the points $\{p_j\}_1^n$ and  $\{-\bar{p}_j\}_1^n$ are located in $\hat{\Omega}_0$), we 
 define $\hat{M}(x,t,k)$ as follows:
\begin{figure}[h]
\centering{\includegraphics[width=0.99\linewidth]{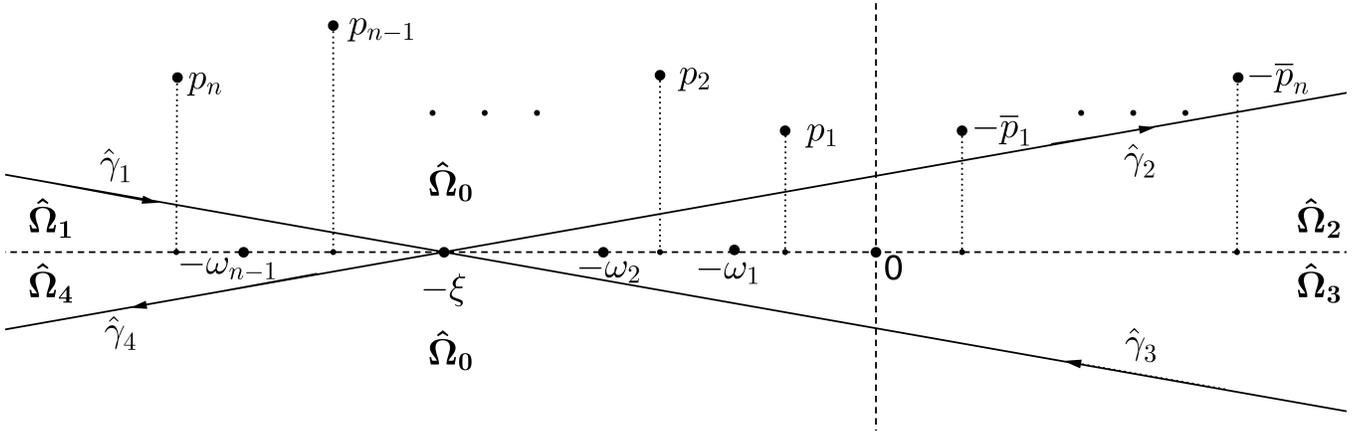}}
\caption{The domains $\hat\Omega_j$, $j=0,\dots,4$  and the contour 
$\hat\Gamma=\hat\gamma_1\cup...\cup\hat\gamma_4$}
\label{F1}
\end{figure}
\begin{equation}
\hat{M}(x,t,k)=
\begin{cases}
\tilde{M}(x,t,k),& k\in\hat\Omega_0,\\
\tilde{M}(x,t,k)
\begin{pmatrix}
1& \frac{r_2(k)\delta^{2}(k,\xi)}{1-r_1(k)r_2(k)}e^{-2it\theta}\\
0& 1\\
\end{pmatrix}
,& k\in\hat\Omega_1,
\\
\tilde{M}(x,t,k)
\begin{pmatrix}
1& 0\\
-r_1(k)\delta^{-2}(k,\xi)e^{2it\theta}& 1\\
\end{pmatrix}
,& k\in\hat\Omega_2,
\\
\tilde{M}(x,t,k)
\begin{pmatrix}
1& -r_2(k)\delta^2(k,\xi)e^{-2it\theta}\\
0& 1\\
\end{pmatrix}
,& k\in\hat\Omega_3,
\\
\tilde{M}(x,t,k)
\begin{pmatrix}
1& 0\\
\frac{r_1(k)\delta^{-2}(k,\xi)}{1-r_1(k)r_2(k)}e^{2it\theta}& 1\\
\end{pmatrix}
,& k\in\hat\Omega_4.
\end{cases}
\end{equation}
Then $\hat M(x,t,k)$ satisfies the jump (across $\hat\Gamma$) and norming
\begin{subequations}\label{RHhat}
\begin{align}
&\hat{M}_+(x,t,k)=\hat{M}_-(x,t,k)\hat{J}(x,t,k),&&k\in\hat\Gamma,\\
&\hat{M}(x,t,k)\rightarrow I,&&k\rightarrow\infty,
\end{align}
\end{subequations}
with the jump matrix 
\begin{equation}
\label{J-hat}
\hat{J}(x,t,k)=
\begin{cases}
\begin{pmatrix}
1& \frac{-r_2(k)\delta^{2}(k,\xi)}{1-r_1(k)r_2(k)}e^{-2it\theta}\\
0& 1\\
\end{pmatrix}
,& k\in\hat\gamma_1,
\\
\begin{pmatrix}
1& 0\\
r_1(k)\delta^{-2}(k,\xi)e^{2it\theta}& 1\\
\end{pmatrix}
,& k\in\hat\gamma_2,
\\
\begin{pmatrix}
1& r_2(k)\delta^2(k,\xi)e^{-2it\theta}\\
0& 1\\
\end{pmatrix}
,& k\in\hat\gamma_3,
\\
\begin{pmatrix}
1& 0\\
\frac{-r_1(k)\delta^{-2}(k,\xi)}{1-r_1(k)r_2(k)}e^{2it\theta}& 1\\
\end{pmatrix}
,& k\in\hat\gamma_4,
\end{cases}
\end{equation}
the residue conditions
\begin{subequations}
\begin{align}
\label{respj}
&\underset{k=p_j}{\operatorname{Res}}\hat M^{(1)}(x,t,k)=f_j(x,t)\hat M^{(2)}(x,t,p_j),\quad j=\overline{1,n},\\
&\underset{k=-\overline{p}_j}{\operatorname{Res}}\hat M^{(1)}(x,t,k)=
\tilde f_j(x,t)\hat M^{(2)}(x,t,-\overline{p}_j),\quad j=\overline{1,n},
\end{align}
\end{subequations}
with
\begin{equation}\label{f-j}
f_j(x,t)=\frac{\eta_je^{2ip_jx+4ip_j^2t}}{\dot a_1(p_j)\delta^{2}(p_j,\xi)},\quad 
\tilde f_j(x,t)=\frac{e^{-2i\overline{p}_jx+4i\overline{p}_j^2t}}
{\bar\eta_j\dot a_1(-\overline{p}_j)
\delta^{2}(-\overline{p}_j,\xi)},
\end{equation}
 the residue condition at $k=0$:
\begin{equation}\label{zc}
\underset{k=0}{\operatorname{Res}}\hat M^{(2)}(x,t,k)=c_0(\xi)
\hat M^{(1)}(x,t,0),
\end{equation}
with $c_0(\xi)=\frac{A\delta^2(0,\xi)}{2i}$,
and the (singular) behavior at $k=-\xi$:
\begin{equation}\label{omegahat}
\hat{M}(x,t,k)=
\left(\hat{M}_{-\xi}(x,t)+O(k+\xi)\right)
(k+\xi)^{\Im\nu(-\xi)\sigma_3},\quad
k\to-\xi,
\end{equation}
where $\hat{M}_{-\xi}(x,t)$ is  some 
matrix function with $\det\hat{M}_{-\xi}(x,t)=1$ for all $x$ and $t$.

Notice that the pseudo-residue conditions (\ref{z}) have transformed  into 
(\ref{zc}), the latter having the form of  a conventional residue condition.

\begin{proposition}\label{asRH}
For any fixed $\xi=\frac{x}{4t}$, $\xi>0$ such that 
$\xi\not\in \{\omega_m\}_1^{n-1}\cup\{\Re p_m\}_1^n \cup\{0\}$,
the solution of the Riemann-Hilbert problem (\ref{RHhat})-(\ref{omegahat}) can be approximated (as $t\to\infty$) by 
the solution of a RH problem (denoted by $M^{as}$)  characterized by a single 
residue condition (at $k=0$) and a weak singularity at $k=-\xi$: 
\begin{align}\label{solas}
&q(x,t)=2i\lim_{k\to\infty}kM_{12}^{as}(\xi,t,k)+ O(e^{-Ct}),\quad t\to\infty,\\
\label{sol1as}
&q(-x,t)=-2i\lim_{k\to\infty}k\overline{M_{21}^{as}(\xi,t,k)}+
O(e^{-Ct}),\quad t\to\infty,
\end{align}
with some $C>0$.
Depending on the value of $\xi$, the approximating RH problem 
has one of two forms
(to make the presentation more compact, we adopt the convention
$\prod\limits_{s=m_1}^{m_2}(\cdot)_s=1$, if $m_1>m_2$):
\begin{enumerate}[(i)]
\item for $-\omega_{n-m}<-\xi<\Re p_{n-m}$, $m=\overline{0,n-1}$, $M^{as}$ solves
the RH problem
\begin{subequations}\label{RHas1}
\begin{align}
&M^{as}_+(\xi,t,k)=M^{as}_-(\xi,t,k)J^{as}(\xi,t,k),&& k\in\hat\Gamma,\\
&M^{as}(\xi,t,k)\to I, && k\to\infty,\\
\label{RHas1c}
&\underset{k=0}{\operatorname{Res}}M^{as\,(2)}(\xi,t,k)=
c_{0}^{as}(\xi)M^{as\,(1)}(\xi,t,0),\\
\label{singxi1}
&M^{as}(\xi,t,k)=
\left(M^{as}_{-\xi}(\xi,t)+O(k+\xi)\right)
(k+\xi)^{(\Im\nu(-\xi)-m)\sigma_3},&&
k\to-\xi,
\end{align}
\end{subequations}
where
\begin{equation}\label{c0as}
c_{0}^{as}(\xi)=\frac{A\delta^2(0,\xi)}{2i}
\prod\limits_{s=0}^{m-1}\left(\frac{\xi}{p_{n-s}}\right)^2
\end{equation}
and
\begin{equation}\label{RHas1j}
J^{as}(\xi,t,k)=
\left(\prod\limits_{s=0}^{m-1}\frac{k+\xi}{k-p_{n-s}}\right)^{\sigma_3}
\hat{J}(x,t,k)
\left(\prod\limits_{s=0}^{m-1}\frac{k+\xi}{k-p_{n-s}}\right)
^{-\sigma_3},\,k\in\hat\Gamma.
\end{equation}

\item for $\Re p_{n-m}<-\xi<-\omega_{n-m-1}$, $m=\overline{0,n-1}$, $M^{as}$ solves
the RH problem
\begin{subequations}\label{RHas2}
\begin{align}
&M^{as}_+(\xi,t,k)=M^{as}_-(\xi,t,k)J^{as}(\xi,t,k),&& k\in\hat\Gamma,\\
&M^{as}(\xi,t,k)\to I, && k\to\infty,\\
&\underset{k=0}{\operatorname{Res}}\ M^{as\,(1)}(\xi,t,k)=
c_{0}^{as\#}(\xi)M^{as\,(2)}(\xi,t,0),\\
&M^{as}(x,t,k)=
\left(M^{as}_{-\xi}(x,t)+O(k+\xi)\right)
(k+\xi)^{(\Im\nu(-\xi)-m)\sigma_3},&&
k\to-\xi,
\end{align}
\end{subequations}
where 
\begin{equation}\label{c0as2}
c_0^{as\#}(\xi)=\frac{2ip_{n-m}^2}{A\delta^2(0,\xi)}
\prod\limits_{s=0}^{m-1}\left(\frac{p_{n-s}}{\xi}\right)^2
\end{equation}
and
\begin{equation}\label{J-ii}
J^{as}(\xi,t,k)=
\left(d(k)
\prod\limits_{s=0}^{m-1}\frac{k+\xi}{k-p_{n-s}}
\right)^{\sigma_3}
\hat{J}(x,t,k)
\left(d(k)
\prod\limits_{s=0}^{m-1}\frac{k+\xi}{k-p_{n-s}}\right)^{-\sigma_3},
\,k\in\hat\Gamma,
\end{equation}
with $d(k)=\frac{k}{k-p_{n-m}}$.
\end{enumerate}
\end{proposition}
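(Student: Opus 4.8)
\emph{Proof proposal.} The RH problem (\ref{RHhat})--(\ref{omegahat}) carries, besides the cross jump on $\hat\Gamma$ and the residue condition at $k=0$, a residue condition at every $p_j$ and $-\bar p_j$ and a \emph{strong} singularity at $k=-\xi$. I would strip off all of this extra data by conjugating $\hat M$ with an explicit scalar Blaschke-type factor, and then argue that after the conjugation every residue coefficient except the one at $k=0$ is exponentially small and may be dropped. First I would record which residue coefficients grow. Since $\delta(k,\xi)$ does not depend on $t$, formula (\ref{f-j}) with $x=4\xi t$ gives $|f_j(x,t)|=C_j(\xi)\,e^{-8t\,\Im p_j(\xi+\Re p_j)}$ and $|\tilde f_j(x,t)|=\tilde C_j(\xi)\,e^{-8t\,\Im p_j(\xi-\Re p_j)}$ with constants $C_j(\xi),\tilde C_j(\xi)>0$ finite for $\xi$ fixed outside the excluded set. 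Since $\Im p_j>0$ and $\Re p_j<0<\xi$, every $\tilde f_j$ decays exponentially, while $f_j$ decays if $-\xi<\Re p_j$ and grows if $-\xi>\Re p_j$. Combining this with the interlacing (\ref{order}): for $\xi$ in the range of case (i), $\Re p_n<\cdots<\Re p_{n-m+1}<-\xi<\Re p_{n-m}<\cdots<\Re p_1$, so exactly the $m$ coefficients $f_n,\dots,f_{n-m+1}$ grow; for $\xi$ in the range of case (ii), one has in addition $\Re p_{n-m}<-\xi$, so exactly the $m+1$ coefficients $f_n,\dots,f_{n-m}$ grow.

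\emph{The conjugation.} I would set $B(k)=\prod_{s=0}^{m-1}\frac{k+\xi}{k-p_{n-s}}$ in case (i) and $B(k)=\frac{k}{k-p_{n-m}}\prod_{s=0}^{m-1}\frac{k+\xi}{k-p_{n-s}}$ in case (ii): in both cases $B$ is rational, $B(\infty)=1$, $B$ is regular and non-vanishing on $\hat\Gamma\setminus\{-\xi\}$, its poles are exactly the $p_j$ with growing $f_j$, and its zeros are at $k=-\xi$ with multiplicity $m$, plus a simple zero at $k=0$ in case (ii). Then $M^\#:=\hat M\,B^{-\sigma_3}$ satisfies $M^\#\to I$ at infinity, $M^\#_+=M^\#_-\,B^{\sigma_3}\hat J\,B^{-\sigma_3}$ on $\hat\Gamma$ (this is (\ref{RHas1j}), resp.\ (\ref{J-ii})), and, by local expansions at the special points: (a) since $B(k)\sim c(k+\xi)^m$ at $k=-\xi$, the exponent in (\ref{omegahat}) drops to $\Im\nu(-\xi)-m$, which by (\ref{3.8}) lies in $(-\tfrac12,\tfrac12)$ throughout the admissible range of $\xi$, giving the weak singularity (\ref{singxi1}); (b) at each $p_{n-s}$, $s=0,\dots,m-1$ (and at $p_{n-m}$ in case (ii)) the simple zero of $B^{-1}$ cancels the pole of $\hat M^{(1)}$, so $M^{\#(1)}$ is regular there and $M^{\#(2)}$ picks up a simple pole with residue coefficient $\propto 1/f_{n-s}$ (resp.\ $1/f_{n-m}$); (c) at the remaining $p_j$ and at every $-\bar p_j$ the residue condition survives in its original column with coefficient $f_jB(p_j)^{-2}$, resp.\ $\tilde f_jB(-\bar p_j)^{-2}$; (d) at $k=0$, in case (i) $B(0)\neq0$ is finite so (\ref{zc}) keeps its form with $c_0^{as}=B(0)^2c_0(\xi)$, which is (\ref{c0as}), while in case (ii) the simple zero of $B$ at $0$ turns (\ref{zc}) into $\operatorname{Res}_{k=0}M^{\#(1)}=c_0^{as\#}M^{\#(2)}(0)$ with $c_0^{as\#}=(B'(0))^{-2}c_0(\xi)^{-1}$, which evaluates to (\ref{c0as2}).

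\emph{Discarding the exponentially small data.} By the estimates above, every residue coefficient in (b) and (c) is $O(e^{-Ct})$ for some $C=C(\xi)>0$. Let $M^{as}$ be the solution of the RH problem obtained from the one solved by $M^\#$ by deleting exactly these residue conditions; this is precisely (\ref{RHas1}), resp.\ (\ref{RHas2}). Granting the solvability of the $M^{as}$ problem for large $t$ (established in the course of proving Theorem \ref{th1} via explicit parametrices), the quotient $\mathcal R:=M^\#(M^{as})^{-1}$ has no jump across $\hat\Gamma$, is bounded at $k=-\xi$ and at $k=0$ (the singular and residue structures of $M^\#$ and $M^{as}$ there coincide, and $\det M^\#\equiv\det M^{as}\equiv1$), and is meromorphic with at most simple poles at the finitely many discarded points, each with residue $O(e^{-Ct})$. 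Hence $\mathcal R$ solves a ``pure pole'' RH problem whose solution is determined by a finite linear system with $O(e^{-Ct})$ coefficients, so $\mathcal R=I+O(e^{-Ct})$ uniformly in $k$. Therefore $M^\#=(I+O(e^{-Ct}))M^{as}$, and since $B^{\pm\sigma_3}\to I$ as $k\to\infty$, inserting this into (\ref{sol})--(\ref{sol1}) produces (\ref{solas})--(\ref{sol1as}).

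\emph{Main obstacle.} The sensitive point is the analysis at $k=-\xi$: one must check that multiplying the already strongly singular $\hat M$ by a factor with a zero of multiplicity $m$ there yields the \emph{exact} exponent $\Im\nu(-\xi)-m$, and that the winding bound (\ref{windingRH})/(\ref{3.8}) confines it to $(-\tfrac12,\tfrac12)$ for all admissible $\xi$; it is precisely this that degrades the strong singularity to a weak one and makes $M^{as}$ accessible to a parabolic-cylinder parametrix in the next step. A secondary but unavoidable chore is keeping track of which column each residue condition lands in and evaluating the factors $B(0)$, $B'(0)$, $B(p_j)$ that fix $c_0^{as}$ and $c_0^{as\#}$.
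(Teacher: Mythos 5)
Your proposal is correct and follows essentially the same route as the paper: the same Blaschke-type scalar conjugation (\ref{singul}) keyed to which residue factors $f_j$ grow along $x=4\xi t$, the same reduction of the exponent at $k=-\xi$ to $\Im\nu(-\xi)-m\in(-\tfrac12,\tfrac12)$ via (\ref{3.8}), and the same discarding of the exponentially small residue conditions, with the constants $c_0^{as}$, $c_0^{as\#}$ coming out exactly as in (\ref{c0as}), (\ref{c0as2}). The only (cosmetic) difference is in case (ii): the paper performs the flip at $k=p_{n-m}$ and $k=0$ by converting the residues to jumps on small circles and conjugating with the matrices $N$, $Q$ and $d^{-\sigma_3}$ in (\ref{35}), whereas you absorb the same factor $d(k)=k/(k-p_{n-m})$ into the global scalar conjugation and compute the flipped residue coefficients directly, arriving at the identical jump (\ref{J-ii}); your closing small-ratio argument for the $O(e^{-Ct})$ error is at least as explicit as the paper's ``ignoring the decaying residue conditions.''
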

\begin{proof}
\textit{(i)} Consider $\xi$ such that $-\omega_{n-m}<-\xi<\Re p_{n-m}$, $m=\overline{0,n-1}$. In this case, the RH problem for 
$\hat M(x,t,k)$ involves $m$  residue conditions, at  $k=p_{n-s}$, $s=\overline{0,m-1}$, with exponentially growing factors, the other having exponentially decaying factors,
see (\ref{respj}). Then, introducing $\check M(x,t,k)$ by
\begin{equation}\label{singul}
\check M(x,t,k):=\hat M(x,t,k) 
\left(
\prod\limits_{s=0}^{m-1}\frac{k+\xi}{k-p_{n-s}}
\right)^{-\sigma_3},\quad k\in\mathbb{C},
\end{equation}
we have (see (\ref{RHas1c})) that  $\check M(x,t,k)$ satisfies a RH problem 
with all residue conditions but one (at $k=0$) having exponentially decaying
 factors. Moreover, $\check M(x,t,k)$
satisfies the jump condition with the jump matrix given by (\ref{RHas1j})
and  has a weak singularity of type (\ref{singxi1})
(in view of (\ref{3.8}), $\Im \nu(-\xi)-m \in (-\frac{1}{2}, \frac{1}{2})$).
Ignoring the residue conditions with decaying factors, we arrive 
at the RH problem (\ref{RHas1}).

\textit{(ii)} Now consider $\Re p_{n-m}<-\xi<-\omega_{n-m-1}$, $m=\overline{0,n-1}$.
In this case, the RH problem for  $\hat M(x,t,k)$ involves $m+1$ 
residue conditions, at  $k=p_{n-s}$, $s=\overline{0,m}$, with exponentially growing factors. Applying the transformation (\ref{singul})  and ignoring the exponentially decaying residue conditions, we arrive
at   the  RH problem with two residue conditions, one of them having 
an exponentially growing factor:
$k=p_{n-m}$ (see (\ref{f-j})):
\begin{subequations}\label{RHaux}
\begin{align}
&\tilde M^{as}_+(\xi,t,k)=\tilde M^{as}_-(\xi,t,k)\tilde J^{as}(\xi,t,k),&& k\in\hat\Gamma,\\
&\tilde M^{as}(\xi,t,k)\to I, && k\to\infty,\\
\label{RHauxc}
&\underset{k=p_{n-m}}{\operatorname{Res}}\tilde M^{as\,(1)}(\xi,t,k)=
f(x,t)\tilde M^{as\,(2)}(\xi,t,p_{n-m}),\\
\label{RHauxd}
&\underset{k=0}{\operatorname{Res}}\tilde M^{as\,(2)}(\xi,t,k)=
c_{0}^{as}(\xi)\tilde M^{as\,(1)}(\xi,t,0),\\
&\tilde M^{as}(x,t,k)=\left(\tilde M^{as}_{-\xi}(x,t)+O(k+\xi)\right)
(k+\xi)^{(\Im\nu(-\xi)-m)\sigma_3},\,
k\to-\xi,
\end{align}
\end{subequations}
where $f(x,t)=f_{n-m}(x,t)\prod\limits_{s=0}^{m-1}
\left(\frac{p_{n-m}-p_{n-s}}{p_{n-m}+\xi}\right)^2$, 
$c_0^{as}(\xi)$ is given by (\ref{c0as}), and
$$
\tilde J^{as}(\xi,t,k)=
\left(
\prod\limits_{s=0}^{m-1}\frac{k+\xi}{k-p_{n-s}}
\right)^{\sigma_3}
\hat{J}(x,t,k)
\left(
\prod\limits_{s=0}^{m-1}\frac{k+\xi}{k-p_{n-s}}
\right)^{-\sigma_3},\,k\in\hat \Gamma.
$$

The latter problem has two residue conditions for the different columns, where one of them is exponentially growing and the other one is bounded. 
Problems of this type can be transformed (see e.g. \cite{DKKZ, RSfs}) in such a way that the exponentially growing conditions ((\ref{RHauxc}), in our case) 
transform to exponentially decaying. 
Indeed, the problem (\ref{RHaux}) with residue conditions can be transformed into 
a regular problem (for $\hat M^{as}$)  having additional parts of the contour in the form of 
small circles, $S_0$ and $S_{p_{n-m}}$,  surrounding respectively $k=0$ and
$k=p_{n-m}$, and the enhanced jump conditions:
\begin{subequations}\label{RHaux2}
	\begin{align}
	&\hat M^{as}_+(x,t,k)=\hat M^{as}_-(x,t,k)\hat J^{as}(x,t,k),&& k\in\hat\Gamma\cup S_0\cup S_{p_{n-m}},\\
	&\hat M^{as}(x,t,k)\to I, && k\to\infty,
	\end{align}
\end{subequations}
where 
\begin{equation}
\hat J^{as}(x,t,k)=
\begin{cases}
\tilde J^{as}(x,t,k),& k\in\hat\Gamma,\\
\begin{pmatrix}
1& -\frac{c_0^{as}(\xi)}{k}\\
0& 1
\end{pmatrix}, & k\in S_0,\\
\begin{pmatrix}
1& 0\\
-\frac{f(x,t)}{k-p_{n-m}}& 1
\end{pmatrix},& k\in S_{p_{n-m}}.
\end{cases}
\end{equation}
Next, introducing $\hat M^{as\#}$ by
\begin{equation}
\label{35}
\hat M^{as\#}(x,t,k)=
\begin{cases}
\hat{M}^{as}(x,t,k)N(\xi,k)
d^{-\sigma_3}(k),
& k\mbox{ inside } S_0,\\
\hat{M}^{as}(x,t,k)Q(x,t,k)
d^{-\sigma_3}(k),
& k\mbox{ inside } S_{p_{n-m}},\\
\hat{M}^{as}(x,t,k)
d^{-\sigma_3}(k),
& \mbox{ otherwise },
\end{cases}
\end{equation}
where $d(k)=\frac{k}{k-p_{n-m}}$,
$
N(\xi,k)=
\begin{pmatrix}
0& \frac{c_0^{as}(\xi)}{k}\\
-\frac{k}{c_0^{as}(\xi)}& 1
\end{pmatrix}
$, and
$
Q(x,t,k)=
\begin{pmatrix}
1& -\frac{k-p_{n-m}}{f(x,t)}\\
\frac{f(x,t)}{k-p_{n-m}}& 0
\end{pmatrix},
$
straightforward  calculations show that the jump matrices for $\hat M^{as\#}$ 
across $S_{p_{n-m}}$ are  exponentially decaying (to the identity matrix),
whereas the jump across $\hat\Gamma$ takes the form (\ref{J-ii}).
\end{proof}

Neglecting the jump conditions in (\ref{RHas1}) and (\ref{RHas2})
(recall that due to the signature table, the jump matrices decay, as $t\to\infty$,
 to the identity matrix exponentially fast outside any vicinity of 
$k=-\xi$), the RH problems (\ref{RHas1}) and (\ref{RHas2}) reduce, as $t\to\infty$,
 to algebraic equations that 
can be solved explicitly:
\begin{equation}\label{MRHasymp}
M^{as}(\xi,t,k)\simeq
\begin{cases}
\begin{pmatrix}
1 & \frac{c_0^{as}(\xi)}{k}\\
0 & 1
\end{pmatrix}
& \mbox{for RH (\ref{RHas1})},\\
\begin{pmatrix}
1 & 0\\
\frac{c_0^{as\#}(\xi)}{k} & 1
\end{pmatrix},
& \mbox{for RH (\ref{RHas2})},
\end{cases}
\end{equation}
where $c_0^{as}(\xi)$ and $c_0^{as\#}(\xi)$ are given by (\ref{c0as}) and (\ref{c0as2}) respectively. Substituting (\ref{MRHasymp}) into  (\ref{solas}) and (\ref{sol1as}),
the (rough) asymptotics (\ref{asq1}) in Theorem \ref{cor1} follow.

\begin{remark}
\label{rem4}
For the focusing NNLS equation \cite{RSfs}, the pure step initial function
with   $R=0$ (i.e., $q_0(x)=q_{0,A}(x)$) satisfies assumptions analogous 
to Assumptions (a)-(c) and thus 
this case is covered by the corresponding asymptotic formulas. 
In contrast with this, for the defocusing NNLS equation,
 $R=0$  is one of the bifurcation values of $R$, see Remark \ref{cdnls}.
\end{remark}

In order to rigorously justify the asymptotic formulas (\ref{asq1}), 
we adapt the nonlinear steepest descent method \cite{DIZ,DZ},
which also allows us 
to make the
asymptotics presented in (\ref{asq1}) more precise.

\begin{theorem}\label{th1}
Consider the initial value problem  (\ref{1}), (\ref{2}). 
Assume that (i) the initial value $q_0(x)$ converges to its boundary values fast enough, (ii) the associated spectral functions $a_j(k)$, $j=1,2$ satisfy Assumptions (a)-(c), and (iii) the spectral functions $r_j(k)$, $j=1,2$
can be analytically continued from the real axis into a band along it.
Assuming also that the solution $q(x,t)$ of (\ref{1}), (\ref{2}) exists, it has the following long-time asymptotics along the rays $\frac{x}{4t}=\xi$:
\begin{description}
\item{\textbf{(i)}} for $-\omega_{n-m}<-\xi<\Re p_{n-m}$, $m=\overline{0,n-1}$,
there are  three types of asymptotics depending on the value of $\Im\nu(-\xi)$:
\begin{enumerate}[1)]
\item if $\Im\nu(-\xi)\in\left(m-\frac{1}{2}, m-\frac{1}{6}\right]$, then
\begin{equation*}
q(x,t)=A\delta^2(0,\xi)
\prod\limits_{s=0}^{m-1}\left(\frac{\xi}{p_{n-s}}\right)^2
+t^{-\frac{1}{2}-\Im\nu(-\xi)+m}\alpha_1(\xi)\exp\{-4it\xi^2+i\Re\nu(-\xi)\ln t\}
+ R_1(\xi,t).
\end{equation*}
\item if $\Im\nu(-\xi)\in\left(m-\frac{1}{6}, m+\frac{1}{6}\right)$, then
\begin{align}
\nonumber
q(x,t)=&A\delta^2(0,\xi)
\prod\limits_{s=0}^{m-1}\left(\frac{\xi}{p_{n-s}}\right)^2
+t^{-\frac{1}{2}-\Im\nu(-\xi)+m}\alpha_1(\xi)\exp\{-4it\xi^2+i\Re\nu(-\xi)
\ln t\}\\
\nonumber
&+t^{-\frac{1}{2}+\Im\nu(-\xi)-m}\alpha_2(\xi)\exp\{4it\xi^2-i\Re\nu(-\xi)
\ln t\}+R_3(\xi,t).
\end{align}
\item if $\Im\nu(-\xi)\in\left[m+\frac{1}{6}, m+\frac{1}{2}\right)$, then
\begin{equation*}
q(x,t)=A\delta^2(0,\xi)
\prod\limits_{s=0}^{m-1}\left(\frac{\xi}{p_{n-s}}\right)^2
+t^{-\frac{1}{2}+\Im\nu(-\xi)-m}\alpha_2(\xi)\exp\{4it\xi^2-i\Re\nu(-\xi)
\ln t\}+R_2(\xi,t).
\end{equation*}
\end{enumerate}
\item{\textbf{(ii)}} for $-\Re p_{n-m}<-\xi<\omega_{n-m}$, $m=\overline{0,n-1}$:
\begin{equation*}
q(x,t)=t^{-\frac{1}{2}-\Im\nu(\xi)+m}\alpha_3(\xi)
\exp\{4it\xi^2-i\Re\nu(\xi)\ln t\}
+ R_2(-\xi,t)
\end{equation*}
\item{\textbf{(iii)}} for $\Re p_{n-m}<-\xi<-\omega_{n-m-1}$, $m=\overline{0,n-1}$:
\begin{equation*}
q(x,t)=t^{-\frac{1}{2}+\Im\nu(-\xi)-m}\alpha_4(\xi)
\exp\{4it\xi^2-i\Re\nu(-\xi)\ln t\}
+ R_2(\xi,t)
\end{equation*}
\item{\textbf{(iv)}} for $\omega_{n-m-1}<-\xi<-\Re p_{n-m}$, $m=\overline{0,n-1}$,
there are  three types of asymptotics  depending on the value of $\Im\nu(\xi)$:
\begin{enumerate}[1)]
\item if $\Im\nu(\xi)\in\left(m-\frac{1}{2}, m-\frac{1}{6}\right]$, then
\begin{equation*}
q(x,t)=\frac{-4\overline{p}_{n-m}^2}{A\overline{\delta^2}(0,-\xi)}
\prod\limits_{s=0}^{m-1}\left(\frac{\overline{p}_{n-s}}
{\xi}\right)^2
+t^{-\frac{1}{2}-\Im\nu(\xi)+m}\alpha_5(\xi)\exp\{4it\xi^2-i\Re\nu(\xi)
\ln t\}+R_1(-\xi,t),
\end{equation*}
\item if $\Im\nu(\xi)\in\left(m-\frac{1}{6}, m+\frac{1}{6}\right)$, then
\begin{align}
\nonumber
q(x,t)=&\frac{-4\overline{p}_{n-m}^2}{A\overline{\delta^2}(0,-\xi)}
\prod\limits_{s=0}^{m-1}\left(\frac{\overline{p}_{n-s}}
{\xi}\right)^2
+t^{-\frac{1}{2}-\Im\nu(\xi)+m}\alpha_5(\xi)\exp\{4it\xi^2-i\Re\nu(\xi)
\ln t\}\\
\nonumber
&+t^{-\frac{1}{2}+\Im\nu(\xi)-m}\alpha_6(\xi)\exp\{-4it\xi^2+i\Re\nu(\xi)
\ln t\}+R_3(-\xi,t).
\end{align}
\item if $\Im\nu(
\xi)\in\left[m+\frac{1}{6}, m+\frac{1}{2}\right)$, then
\begin{equation*}
q(x,t)=\frac{-4\overline{p}_{n-m}^2}{A\overline{\delta^2}(0,-\xi)}
\prod\limits_{s=0}^{m-1}\left(\frac{\overline{p}_{n-s}}
{\xi}\right)^2
+t^{-\frac{1}{2}+\Im\nu(\xi)-m}\alpha_6(\xi)\exp\{-4it\xi^2+i\Re\nu(\xi)\ln t\}
+ R_2(-\xi,t).
\end{equation*}
\end{enumerate}
\end{description}
Here
\begin{equation}
\delta(k,\xi)=
(k+\xi)^{i\nu(-\xi)}e^{\chi(k,\xi)},
\end{equation}
and
\begin{equation}
\nu(-\xi)=-\frac{1}{2\pi}\ln|1+r_1(-\xi)r_2(-\xi)|
-\frac{i}{2\pi}
\int_{-\infty}^{-\xi}\,d\arg(1-r_1(\zeta)r_2(\zeta)),
\end{equation}
with
\begin{equation}
\chi(k,\xi)=-\frac{1}{2\pi i}\int_{-\infty}^{-\xi}\ln(k-\zeta)
d_{\zeta}(1-r_1(\zeta)r_2(\zeta)).
\end{equation}
The modulating functions $\alpha_j(\xi)$, $j=\overline{1,6}$ are as follows:
$$
\alpha_1(\xi)=
\frac{\sqrt{\pi}(c_0^{as}(\xi))^2\prod\limits_{s=0}^{m-1}(\xi+p_{n-s})^2}
{\xi^2 r_2(-\xi)\Gamma(i\nu(-\xi)+m)}
\exp\left\{-\frac{\pi}{2}(\nu(-\xi)-im)+\frac{3\pi i}{4}
-2\chi(-\xi,\xi)+3(i\nu(-\xi)+m)\ln 2\right\},
$$
$$
\alpha_2(\xi)=
\frac{\sqrt{\pi}\prod\limits_{s=0}^{m-1}(\xi+p_{n-s})^{-2}}
{r_1(-\xi)\Gamma(-i\nu(-\xi)-m)}
\exp\left\{-\frac{\pi}{2}(\nu(-\xi)-im)
+\frac{\pi i}{4}+2\chi(-\xi,\xi)-3(i\nu(-\xi)+m)\ln 2\right\},
$$
$$
\alpha_3(\xi)=
\frac{\sqrt{\pi}\prod\limits_{s=0}^{m-1}(\overline{p}_{n-s}-\xi)^{2}}
{\overline{r_2}(\xi)\Gamma(-i\overline{\nu(\xi)}+m)}
\exp\left\{-\frac{\pi}{2}(\overline{\nu(\xi)}+im)+\frac{\pi i}{4}
-2\overline{\chi(\xi,\xi)}-3(i\overline{\nu(\xi)}-m)\ln 2\right\},
$$
$$
\alpha_4(\xi)=
\frac{\sqrt{\pi}\xi^2\prod\limits_{s=0}^{m}(\xi+p_{n-s})^{-2}}
{r_1(-\xi)\Gamma(-i\nu(-\xi)-m)}
\exp\left\{-\frac{\pi}{2}(\nu(-\xi)-im)+\frac{\pi i}{4}
+2\chi(-\xi,\xi)-3(i\nu(-\xi)+m)\ln 2\right\},
$$
$$
\alpha_5(\xi)=
\frac{\sqrt{\pi}\prod\limits_{s=0}^{m}(\overline{p}_{n-s}-\xi)^{2}}
{\xi^2\overline{r_2}(\xi)\Gamma(-i\overline{\nu(\xi)}+m)}
\exp\left\{-\frac{\pi}{2}(\overline{\nu(\xi)}+im)+\frac{\pi i}{4}
-2\overline{\chi(\xi,\xi)}-3(i\overline{\nu(\xi)}-m)
\ln 2\right\},
$$
$$
\alpha_6(\xi)=
\frac{\sqrt{\pi}\left(\overline{c_0^{as\#}(-\xi)}\right)^2
\prod\limits_{s=0}^{m}(\overline{p}_{n-s}-\xi)^{-2}}
{\overline{r_1}(\xi)\Gamma(i\overline{\nu(\xi)}-m)}
\exp\left\{-\frac{\pi}{2}(\overline{\nu(\xi)}+im)+\frac{3\pi i}{4}
+2\overline{\chi(\xi,\xi)}+3(i\overline{\nu(\xi)}-m)
\ln 2\right\},
$$
where 
$$
c_{0}^{as}(\xi)=\frac{A\delta^2(0,\xi)}{2i}
\prod\limits_{s=0}^{m-1}\left(\frac{\xi}{p_{n-s}}\right)^2,\quad
c_0^{as\#}(\xi)=\frac{2ip_{n-m}^2}{A\delta^2(0,\xi)}
\prod\limits_{s=0}^{m-1}\left(\frac{p_{n-s}}{\xi}\right)^2.
$$
Finally, the remainders $R_j(\xi,t)$, $j=\overline{1,3}$ are estimated as follows:
\begin{equation}
\label{R1}
R_1(\xi,t)=
\begin{cases}
O\left(t^{-1}\right),& \Im\nu(-\xi)>m,\\
O\left(t^{-1}\ln t\right),&\Im\nu(-\xi)=m,\\
O\left(t^{-1+2|\Im\nu(-\xi)-m|}\right),&\Im\nu(-\xi)<m,
\end{cases}
\end{equation}
\begin{equation}
\label{R2}
R_2(\xi,t)=
\begin{cases}
O\left(t^{-1+2|\Im\nu(-\xi)-m|}\right),& \Im\nu(-\xi)>m,\\
O\left(t^{-1}\ln t\right),&\Im\nu(-\xi)=m,\\
O\left(t^{-1}\right),&\Im\nu(-\xi)<m,
\end{cases}
\end{equation}
and
\begin{equation*}
R_3(\xi,t)=R_1(\xi,t) + R_2(\xi,t)=
\begin{cases}
O\left(t^{-1+2|\Im\nu(-\xi)-m|}\right),&\Im\nu(-\xi)\not=m,\\
O\left(t^{-1}\ln t\right),&\Im\nu(-\xi)=m.
\end{cases}
\end{equation*}
\end{theorem}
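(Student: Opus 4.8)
The plan is to carry the nonlinear steepest descent analysis one step beyond Proposition~\ref{asRH}, extracting the first correction to the rough asymptotics~(\ref{asq1}). By Proposition~\ref{asRH}, for $\xi>0$ it suffices, up to an error $O(e^{-Ct})$, to analyze $M^{as}$ solving (\ref{RHas1}) (which gives sectors~\textbf{(i)}) or (\ref{RHas2}) (which gives sectors~\textbf{(iii)}); for $\xi<0$ one first uses the symmetry (\ref{sol1as}) (cf.\ Remark~\ref{rem-sym}) to rewrite $q(x,t)=-2i\lim_{k\to\infty}k\,\overline{M_{21}(|x|,t,k)}$, i.e.\ to pass to the RH problem at the positive slow variable $-\xi$; this is why sectors~\textbf{(ii)} and~\textbf{(iv)} are governed by $\nu(\xi)$ instead of $\nu(-\xi)$. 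By Assumption~(iii) the deformations onto the cross $\hat\Gamma$ are analytic, so no $\bar\partial$-corrections are needed.

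First I would split $M^{as}$ into a global and a local parametrix. The global parametrix $M^{\mathrm{glob}}$ is the rational matrix of (\ref{MRHasymp}): it carries only the residue condition at $k=0$ and the normalization at $k=\infty$, and it supplies exactly the $t$-independent leading terms of (\ref{asq1}) — namely $2ic_0^{as}(\xi)=A\delta^2(0,\xi)\prod_{s=0}^{m-1}(\xi/p_{n-s})^2$ in sectors~\textbf{(i)}, and, after the symmetry reduction, $-2i\,\overline{c_0^{as\#}(-\xi)}=-4\overline{p}_{n-m}^{\,2}\big(A\,\overline{\delta^2}(0,-\xi)\big)^{-1}\prod_{s=0}^{m-1}(\overline{p}_{n-s}/\xi)^2$ in sectors~\textbf{(iv)}; in the decaying sectors~\textbf{(ii)} and~\textbf{(iii)} the $k=0$ residue sits on the column for which $M^{\mathrm{glob}}=I$, so no constant term survives. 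The local parametrix $M^{\mathrm{loc}}$ is built on a fixed small disk $D$ centered at the stationary phase point $k=-\xi$ of $\theta(k,\xi)=4k\xi+2k^2$ (where $\theta''=4$), and it must match both the cross jump $\hat J$ (see (\ref{J-hat}), (\ref{RHas1j}), (\ref{J-ii})) and the weak singularity $(k+\xi)^{(\Im\nu(-\xi)-m)\sigma_3}$ of (\ref{singxi1}), whose exponent lies in $(-\tfrac12,\tfrac12)$ by (\ref{3.8}).

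The core of the argument is the explicit construction of $M^{\mathrm{loc}}$. With the rescaling $z=2\sqrt{2}\,e^{i\pi/4}\sqrt{t}\,(k+\xi)$ (so that $2it\theta=-4it\xi^2+\tfrac12 z^2$) and using $\delta^{\pm2}(k,\xi)=(k+\xi)^{\pm2i\nu(-\xi)}e^{\pm2\chi(k,\xi)}$, the local RH problem turns into the standard parabolic-cylinder model: triangular jumps along four rays whose off-diagonal entries are built from $r_1(-\xi)$, $r_2(-\xi)$, $e^{\pm2\chi(-\xi,\xi)}$ and the scaling constants, and whose solution is written in closed form via parabolic cylinder functions $D_a$ with parameter tied to $i\nu(-\xi)+m$ — hence the $\Gamma\big(i\nu(-\xi)+m\big)$ and $\Gamma\big(-i\nu(-\xi)-m\big)$ in the $\alpha_j$. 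Reading off the $1/z$ term of the large-$z$ asymptotics of $D_a$ and undoing the rescaling gives, on $\partial D$, $M^{\mathrm{loc}}=I+t^{-1/2}\big(t^{-(\Im\nu(-\xi)-m)}(\cdot)E_{12}+t^{\,\Im\nu(-\xi)-m}(\cdot)E_{21}\big)+\dots$; tracking all the constants — the factors $c^{\pm 2(i\nu(-\xi)+m)}=2^{\pm3(i\nu(-\xi)+m)}e^{\pm i\pi(i\nu(-\xi)+m)/2}$, the $2^{\pm(i\nu(-\xi)+m)}$ and $e^{\pm i\pi/4}$ entering the $D_a$ asymptotics, the regular parts $e^{\mp2\chi(-\xi,\xi)}$ of $\delta^{\mp2}$ at $k=-\xi$, and the values at $k=-\xi$ of the rational factors extracted in (\ref{singul}) (and of $d(k)$ in the case of (\ref{RHas2})), which produce the products $\prod_{s}(\xi+p_{n-s})^{\mp2}$ and $\prod_{s}(\overline{p}_{n-s}-\xi)^{\pm2}$ — reproduces the modulating functions $\alpha_1,\dots,\alpha_6$ verbatim. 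The main obstacle, and the only genuinely new feature compared with \cite{DIZ,DZ} and with \cite{RSfs}, is that $\Im\nu(-\xi)-m$ may come arbitrarily close to $\pm\tfrac12$: this makes $M^{\mathrm{loc}}-I$ as large as $O\big(t^{-1/2+|\Im\nu(-\xi)-m|}\big)$ on $\partial D$ and forces a careful accounting of sub-subleading contributions.

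Finally, matching the parametrices produces an error matrix $E=M^{as}\big(M^{\mathrm{glob}}M^{\mathrm{loc}}\big)^{-1}$ that solves a RH problem with jump $I+O\big(t^{-1/2+|\Im\nu(-\xi)-m|}\big)$ on $\partial D$ and exponentially small jumps elsewhere; the small-norm theory yields $E=I+E^{(1)}/k+\dots$ with $E^{(1)}=O\big(t^{-1/2+|\Im\nu(-\xi)-m|}\big)$. Substituting $M^{as}=E\,M^{\mathrm{glob}}M^{\mathrm{loc}}$ into (\ref{solas}) (resp.\ into (\ref{sol1as}) after the symmetry reduction) and collecting the $1/k$ coefficients: the $t$-independent term comes from $M^{\mathrm{glob}}$, the two oscillatory terms $t^{-1/2\mp(\Im\nu(-\xi)-m)}\exp\{\mp4it\xi^2\pm i\Re\nu(-\xi)\ln t\}$ come from $M^{\mathrm{loc}}$, and the remainder, which collects the $O(t^{-1})$ from the next order of the $D_a$ expansion together with the $O\big((t^{-1/2+|\Im\nu(-\xi)-m|})^2\big)=O\big(t^{-1+2|\Im\nu(-\xi)-m|}\big)$ quadratic contribution of $E$, equals $R_3(\xi,t)$; the logarithm at the threshold $\Im\nu(-\xi)=m$ is produced by the coalescence of the two scales there, which is also where the $\Gamma$-factors are evaluated on the critical line $\Re(i\nu(-\xi)+m)=0$. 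Comparing the faster-decaying oscillatory term $t^{-1/2-|\Im\nu(-\xi)-m|}$ with the remainder $t^{-1+2|\Im\nu(-\xi)-m|}$ shows that the former is absorbed into $R$ precisely when $|\Im\nu(-\xi)-m|\ge\tfrac16$, which splits sectors~\textbf{(i)} and~\textbf{(iv)} into the three announced subcases; in sectors~\textbf{(ii)} and~\textbf{(iii)} there is no $t$-independent term and only a single oscillatory term, and the same reasoning applies.
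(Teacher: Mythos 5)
Your plan is correct and follows essentially the same route as the paper: the symmetry reduction to $\xi>0$, a parabolic-cylinder local parametrix at the stationary point $k=-\xi$ (built from $\delta$-conjugated, rationally modified reflection coefficients with effective exponent $i\nu(-\xi)+m$, hence the $\Gamma(\pm(i\nu(-\xi)+m))$ factors), the rational structure at $k=0$ carrying the non-decaying term $2ic_0^{as}(\xi)$ (resp.\ $-2i\overline{c_0^{as\#}(-\xi)}$), and the comparison of the subdominant oscillatory exponent $t^{-1/2-|\Im\nu(-\xi)-m|}$ with the quadratic error $t^{-1+2|\Im\nu(-\xi)-m|}$ producing the $1/6$-thresholds. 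The only difference is presentational: you package the error via a global-plus-local parametrix and a small-norm matrix $E$, while the paper removes the $k=0$ residue by the explicit transformations with $V(k)$ and estimates a singular-integral representation (which is also how both oscillatory terms end up in the $12$-entry, via conjugation by the rational factor containing $c_0^{as}$, and how the column-wise remainders $R_1$, $R_2$ arise) — the resulting orders coincide with yours.
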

\begin{proof}

The implementation of the nonlinear steepest descent method to 
the Riemann-Hilbert problems (\ref{RHas1}) and (\ref{RHas2}) is similar, in many aspects, to that in the case of the focusing NNLS \cite{RSfs}. Therefore, in what follows 
we discuss the peculiarities of realization of the method only and refer the reader to \cite{RSfs} for details.

We begin with the analysis of the Riemann-Hilbert problem (\ref{RHas1}) (the analysis of (\ref{RHas2}) is similar). First, we make the following transformation:
\begin{equation}
\nonumber
\check M^{as}(x,t,k)=
\begin{cases}
M^{as}(x,t,k)
\begin{pmatrix}
1& -\frac{c_0^{as}}{k}\\
0& 1\\
\end{pmatrix},
& k\mbox{ inside } S_{0},\\
M^{as}(x,t,k),
& \mbox{ otherwise},
\end{cases}
\end{equation}
where $S_0=\{k: |k|<\varepsilon\}$ with $\varepsilon>0$ small enough.
Then $\check M^{as}(x,t,k)$ solves the Riemann-Hilbert problem without residue condition:
\begin{subequations}
\begin{align}
&\check M^{as}_+(x,t,k)=\check M^{as}_-(x,t,k)\check J^{as}(x,t,k),&& k\in\hat\Gamma\cup S_0,\\
&\check M^{as}(x,t,k)\to I, && k\to\infty,
\end{align}
\end{subequations}
with
\begin{equation}
\check J^{as}(x,t,k)=
\begin{cases}
J^{as}(x,t,k),& k\in\hat\Gamma,\\
\begin{pmatrix}
1& -\frac{c_0^{as}}{k}\\
0& 1
\end{pmatrix}, & k\in S_0.
\end{cases}
\end{equation}
Taking into account (\ref{deltasing}), the jump matrix $\check J^{as}$ on $\hat\Gamma$ can be written as follows:

\begin{equation}
\label{Jas-hat}
\hat{J}(x,t,k)=
\begin{cases}
\begin{pmatrix}
1& \frac{-\check r^{as}_2(k)(k+\xi)^{2i\check\nu(-\xi)}}{1-\check r^{as}_1(k)\check r^{as}_2(k)}e^{-2it\theta+2\chi(k,\xi)}\\
0& 1\\
\end{pmatrix}
,& k\in\hat\gamma_1,
\\
\begin{pmatrix}
1& 0\\
\check r^{as}_1(k)(k+\xi)^{-2i\check\nu(-\xi)}e^{2it\theta-2\chi(k,\xi)}& 1\\
\end{pmatrix}
,& k\in\hat\gamma_2,
\\
\begin{pmatrix}
1& \check r^{as}_2(k)(k+\xi)^{2i\check\nu(-\xi)}e^{-2it\theta+2\chi(k,\xi)}\\
0& 1\\
\end{pmatrix}
,& k\in\hat\gamma_3,
\\
\begin{pmatrix}
1& 0\\
\frac{-\check r^{as}_1(k)(k+\xi)^{-2i\check\nu(-\xi)}}{1-r^{as}_1(k)r^{as}_2(k)}
e^{2it\theta-2\chi(k,\xi)}& 1\\
\end{pmatrix}
,& k\in\hat\gamma_4,
\end{cases}
\end{equation}
where 
\begin{subequations}
\begin{align}
\label{r_1^{as}}
&\check r_1^{as}(k)=r_1(k)\prod\limits_{s=0}^{m-1}(k-p_{n-s})^2,\\
\label{r_2^{as}}
&\check r_2^{as}(k)=r_2(k)\prod\limits_{s=0}^{m-1}(k-p_{n-s})^{-2},\\
&i\check\nu(-\xi)=i\nu(-\xi)+m.
\end{align}
\end{subequations}
Now we  introduce the local parametrix $\check m^{as}_0(x,t,k)$ using  arguments similar to those in the case of the local  nonlinear Schr\"odinger equation (see e.g. \cite{DIZ, FIKK, I1, Len15}):
\begin{equation}\label{m0-R}
\check m^{as}_0(x,t,k)=\Delta(\xi,t)m^{\Gamma}(\xi,z(k)) \Delta^{-1}(\xi,t),
\end{equation}
where $z(k)$ is the rescaled variable defined by
\begin{equation}\label{k-z}
k=\frac{z}{\sqrt{8t}}-\xi,
\end{equation}
\begin{equation}\label{Delta}
\Delta(\xi,t) = e^{(2 i t \xi^2 + \chi(-\xi,\xi))\sigma_3}
(8t)^{\frac{-i\check\nu(-\xi)}{2}\sigma_3},
\end{equation}
 $m^{\Gamma}(\xi,z)$ is determined by 
\begin{equation}\label{m-g-0}
m^{\Gamma}(\xi,z) = m_0(\xi,z) D^{-1}_{j}(\xi,z),\qquad z\in\Omega_j,\,\,j=\overline{0,4},
\end{equation}
see Figure \ref{mod2},
where 
$$
D_0(\xi,z)=e^{-i\frac{z^2}{4}\sigma_3}z^{i\check \nu(-\xi)\sigma_3},
$$
\begin{equation}
\nonumber
\begin{matrix}
D_1(\xi,z)=D_0(\xi,z)
\begin{pmatrix}
1& \frac{\check r^{as}_2(-\xi)}{1+\check r^{as}_1(-\xi)\check r^{as}_2(-\xi)}\\
0& 1\\
\end{pmatrix},
&&
D_2(\xi,z)=D_0(\xi,z)
\begin{pmatrix}
1& 0\\
\check r^{as}_1(-\xi)& 1\\
\end{pmatrix},\\
D_3(\xi,z)=D_0(\xi,z)
\begin{pmatrix}
1& -\check r^{as}_2(-\xi)\\
0& 1\\
\end{pmatrix},
&&
D_4(\xi,z)=D_0(\xi,z)
\begin{pmatrix}
1& 0\\
\frac{-\check r^{as}_1(-\xi)}{1+\check r^{as}_1(-\xi)\check r^{as}_2(-\xi)}& 1\\
\end{pmatrix},
\end{matrix}
\end{equation}
and $m_0(\xi,z)$ is the solution of the following RH problem with a constant jump matrix:
\begin{subequations}\label{as8}
	\begin{align}
	&m_{0+}(\xi,z)=m_{0-}(\xi,z)j_0(\xi),&& z\in\mathbb{R},\\
	&m_0(\xi,z)= \left(I+O(1/z)\right)
	e^{-i\frac{z^2}{4}\sigma_3}z^{i\check \nu(-\xi)\sigma_3},&& z\rightarrow\infty,
	\end{align}
\end{subequations}
where  
\begin{equation}\label{j0}
j_0(\xi)=
\begin{pmatrix}
1+ \check r^{as}_1(-\xi)\check r^{as}_2(-\xi) & \check r^{as}_2(-\xi)\\
\check r^{as}_1(-\xi) & 1
\end{pmatrix}.
\end{equation}

\begin{figure}[h]
	\begin{minipage}[h]{0.99\linewidth}
		\centering{\includegraphics[width=0.5\linewidth]{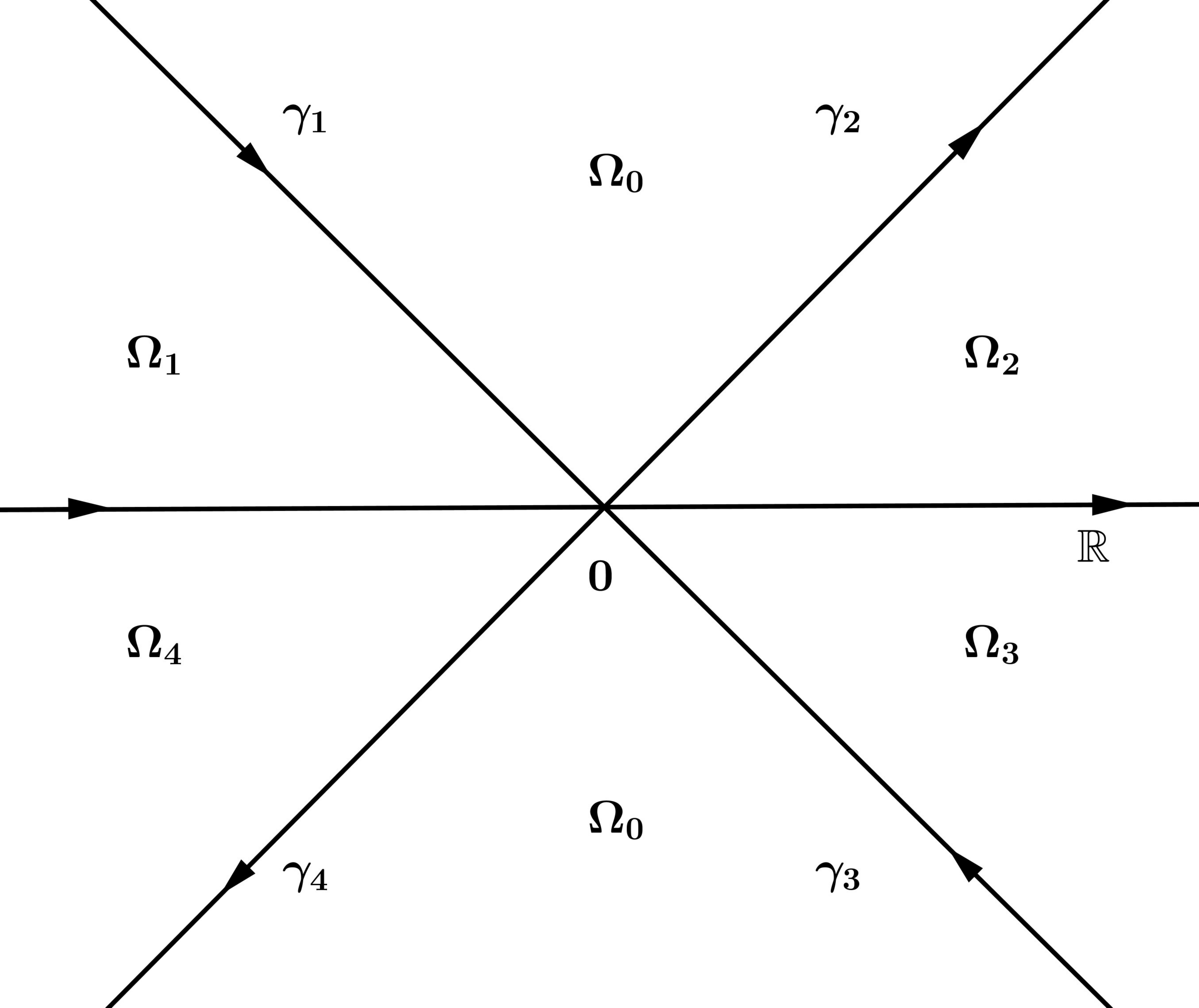}}
		\caption{Contour and domains for $m^{\Gamma}(\xi,z)$ in the $z$-plane  }
		\label{mod2}
	\end{minipage}
\end{figure}

The latter problem can be solved explicitly in terms of the parabolic
cylinder functions \cite{I1}.
For obtaining the long time asymptotics of $q(x,t)$ we need the large-$z$ asymptotics of $m^{\Gamma}(\xi,z)$:
\[
m^{\Gamma}(\xi,z) = I + \frac{i}{z}\begin{pmatrix}
0 & \beta(\xi) \\ -\gamma(\xi) & 0
\end{pmatrix} + O(z^{-2}), \qquad z\to \infty,
\]
where
\begin{subequations}\label{be-ga-R}
\begin{align}
\beta(\xi)=\dfrac{\sqrt{2\pi}e^{-\frac{\pi}{2}\check\nu(-\xi)}e^{-\frac{3\pi i}{4}}}{\check r^{as}_1(-\xi)
\Gamma(-i\check\nu(-\xi))},\\
\gamma(\xi)=\dfrac{\sqrt{2\pi}e^{-\frac{\pi}{2}\check\nu(-\xi)}e^{-\frac{\pi i}{4}}}{\check r^{as}_2(-\xi)\Gamma(i\check\nu(-\xi))}.
\end{align}
\end{subequations}

Similarly to \cite{RSfs}, we define $\breve M^{as}(x,t,k)$ by
\begin{equation}\label{breveM}
\breve M^{as}(x,t,k) = 
\begin{cases}
\check M^{as}(x,t,k)(\check m_0^{as})^{-1}(x,t,k)V(k), & 
k\mbox{ inside } S_{-\xi}, \\
\check M^{as}(x,t,k), & k\mbox{ inside } S_0,\\
\check M^{as}(x,t,k)V(k), & \mbox{ otherwise },
\end{cases}
\end{equation}
where $V(k)=\begin{pmatrix}1& -\frac{c_0^{as}}{k}\\0& 1 \end{pmatrix}$
and $S_{-\xi}$ is a small counterclockwise oriented circle centered at  $k=-\xi$.
Straightforward calculations show that $\breve M^{as}$ solves the following RH problem on $\hat\Gamma_1=\hat\Gamma\cup S_{-\xi}$:
\begin{align}\label{M-breve-RHP}
&\breve M^{as}_+(x,t,k) = \breve M^{as}_-(x,t,k)\breve J^{as}(x,t,k), && k\in\hat\Gamma_1,\\
&\breve M^{as}(x,t,k)\to I, && k\to \infty,
\end{align}
with the jump matrix
\begin{equation}\label{J-breve}
\breve J^{as}(x,t,k) = 
\begin{cases}
V^{-1}(k)\check m_{0-}^{as}(x,t,k) \check J^{as}(x,t,k)
(\check m_{0+}^{as})^{-1}(x,t,k)
V(k), & k\in \hat\Gamma_1, k\mbox{ inside } S_{-\xi},\\
V^{-1}(k)(\check m_{0}^{as})^{-1} (x,t,k)V(k), & k\in S_{-\xi}, \\
V^{-1}(k)\check J^{as}(x,t,k)V(k), & \text{otherwise}.
\end{cases}
\end{equation}
Taking into account (\ref{breveM}), the solution of the original RH problem is given in terms of $\breve M^{as}(x,t,k)$ as follows:
\begin{equation}
q(x,t)=2i\left(c_0^{as}+\lim_{k\to\infty}k\breve M^{as}_{12}(x,t,k)\right),
\end{equation}
and
\begin{equation}
q(-x,t)=-2i\lim_{k\to\infty}k\overline{\breve M^{as}_{21}(x,t,k)}.
\end{equation}

For evaluating the large-$t$ asymptotics of $\breve M^{as}(x,t,k)$ we need the asymptotics of the local parametrix $(\check m_{0}^{as})^{-1} (x,t,k)$:
\begin{equation}\label{m_0}
(\check m_{0}^{as})^{-1} (x,t,k)=\Delta(\xi,t)
(m^{\Gamma})^{-1}(\xi,\sqrt{8t}(k+\xi))\Delta^{-1}(\xi,t)=
I+\frac{B(\xi,t)}{\sqrt{8t}(k+\xi)}+\tilde{r}(\xi,t),
\end{equation}
where the entries of $B(\xi,t)$ are as follows (cf. \cite{RSfs}):
\begin{subequations}\label{B}
\begin{align}
&B_{11}(\xi,t)=B_{22}(\xi,t)=0,\\
&B_{12}(\xi,t)=-i\beta(\xi)e^{4it\xi^2+2\chi(-\xi,\xi)}
(8t)^{-i\check \nu(-\xi)},\\
&B_{21}(\xi,t)=i\gamma(\xi)e^{-4it\xi^2-2\chi(-\xi,\xi)}
(8t)^{i\check \nu(-\xi)},
\end{align}
\end{subequations}
and the remainder is:
\begin{equation}
\tilde r(\xi,t)=
\begin{pmatrix}
O\left(t^{-1-\Im\check\nu(-\xi)}\right)& O\left(t^{-1+\Im\check\nu(-\xi)}\right)\\
O\left(t^{-1-\Im\check\nu(-\xi)}\right)& O\left(t^{-1+\Im\check\nu(-\xi)}\right)
\end{pmatrix},\quad t\to\infty.
\end{equation}

 $\breve M^{as}(x,t,k)$ can be written as 
\begin{equation}\label{singintm}
\breve M^{as}(x,t,k)=I+\frac{1}{2\pi i}\int\limits_{\hat\Gamma_1}\mu(x,t,s)
(\breve J^{as}(x,t,s)-I)\frac{ds}{s-k},
\end{equation}
where $\mu(x,t,k)$ solves the integral equation
\begin{equation}
\mu(x,t,k)=I+\frac{1}{2\pi i}\lim_{\substack{k'\to k\\k'\in-\mbox{side}}}
\int_{\hat\Gamma_1}\frac{\mu(x,t,s)(\breve J^{as}(x,t,s)-I)}{s-k'}\,ds.
\end{equation}
Estimating the rhs of (\ref{singintm}) (cf. \cite{RS}) we conclude that
\begin{align}
\nonumber
\lim_{k\to\infty} k\left(\breve M^{as}(x,t,k) - I\right)&=
-\frac{1}{2\pi i}\int_{S_{-\xi}}V^{-1}(k)((\check m_0^{as})^{-1}(x,t,k)-I)V(k)\,dk+
R(\xi,t)\\
\label{M^R1}
&=B^{as}(\xi,t)+R(\xi,t),
\end{align}
where $R(\xi,t)=
\begin{pmatrix}
R_1(\xi,t)& R_1(\xi,t) + R_2(\xi,t)\\ 
R_1(\xi,t)& R_1(\xi,t) + R_2(\xi,t)
\end{pmatrix}$ and (see (\ref{B}))
\begin{equation}
B^{as}(\xi,t)=\frac{1}{\sqrt{8t}}
\begin{pmatrix}
\frac{c_0^{as}(\xi)}{\xi}B_{21}(\xi,t)&
\frac{(c_0^{as}(\xi))^2}{\xi^2}B_{21}(\xi,t)-B_{12}(\xi,t)\\
-B_{21}(\xi,t)& -\frac{c_0^{as}(\xi)}{\xi}B_{21}(\xi,t)
\end{pmatrix}.
\end{equation}
Replacing  $\check r_j^{as}(-\xi)$, $j=1,2$ and $\check \nu(-\xi)$ by 
$r_j(-\xi)$, $j=1,2$ and $\nu(-\xi)$ respectively, we arrive at asymptotics described by \textbf{(i)} and \textbf{(iii)}.

Returning to the Riemann-Hilbert problem (\ref{RHas2}),
the reflection coefficients $\check r_j^{as}(k)$, $j=1,2$ (see (\ref{r_1^{as}}) and (\ref{r_2^{as}})) have the form
$$
r^{as}_1(k)=r_1(k)d^{-2}(k)
\prod\limits_{s=0}^{m-1}\left(k-p_{n-s}\right)^2,
\quad
r^{as}_2(k)=r_2(k)d^2(k)
\prod\limits_{s=0}^{m-1}\left(k-p_{n-s}\right)^{-2},
$$ where $d(k)=\frac{k}{k-p_{n-m}}$. 
Moreover,
$
V(k)=\begin{pmatrix}1& 0\\-\frac{c_0^{as\#}(\xi)}{k}& 1 \end{pmatrix}
$
in the definition of $\breve M^{as}(x,t,k)$ (see (\ref{breveM})).
Therefore,
\begin{subequations}\label{qsec}
\begin{align}
&q(x,t)=2i\lim_{k\to\infty}k\breve M^{as}_{12}(x,t,k),\\
&q(-x,t)=-2i\left(\overline{c_0^{as\#}(\xi)}+\lim_{k\to\infty}k\overline{\breve M^{as}_{21}(x,t,k)}\right),
\end{align}
\end{subequations}
and  $B^{as}(\xi,t)$ and $R(\xi,t)$ in (\ref{M^R1}) are as follows:
\begin{equation}\label{basces}
B^{as}(\xi,t)=\frac{1}{\sqrt{8t}}
\begin{pmatrix}
-\frac{c_0^{as\#}(\xi)}{\xi}B_{12}(\xi,t)&
-B_{12}(\xi,t)\\
\frac{(c_0^{as\#}(\xi))^2}{\xi^2}B_{12}(\xi,t)-B_{21}(\xi,t)& \frac{c_0^{as\#}(\xi)}{\xi}B_{12}(\xi,t)
\end{pmatrix},
\end{equation}
and  $R(\xi,t)=
\begin{pmatrix}
R_1(\xi,t) + R_2(\xi,t)& R_2(\xi,t)\\
R_1(\xi,t) + R_2(\xi,t)& R_2(\xi,t)
\end{pmatrix}$.
Collecting (\ref{qsec}) and (\ref{basces}) we obtain items \textbf{(ii)} and \textbf{(iv)} of the Theorem.

\end{proof}

\section{Transition regions}

In Theorem \ref{th1} we present the large-time behavior of the solution $q(x,t)$ along the rays $\xi=\frac{x}{4t}=const$ for all $\xi\not\in\{\pm\Re p_m,\pm\omega_{m-1}|m=\overline{1,n}\}$, i.e. for all $\xi\in\mathbb{R}$ except the boundaries of the qualitatively different asymptotic sectors. Since these asymptotic regimes do not match as the slow variable $\xi$ approaches the edges of the sectors, the study of the transition zones between the decaying and constant regimes is a non-trivial task.

We conjecture that there are 3 different types of transition regions in the asymptotics described in Theorem \ref{th1}:
\begin{enumerate}[(1)]
\item zones near the rays $\xi=\pm\Re p_{m}$, $m=\overline{1,n}$,
\item zones as $\xi$ approaches to $\omega_m$, $m=\overline{1,n-1}$, 
\item a zone as $\xi$ approaches to zero.
\end{enumerate}

In the following proposition we describe the transition zones of type (1),
where the transition is described by a solitary kink  propagating
 along the rays $\xi=\pm4\Re p_m$, $m=\overline{1,n}$:
\begin{proposition}
	\label{solit_tr}
	Under assumptions of Theorem \ref{th1}, the solution $q(x,t)$
	of problem (\ref{1}), (\ref{2}) has the following asymptotics along the rays 
	$\xi=\pm4\Re p_{n-m}$, $m=\overline{0,n-1}$:
	\begin{equation}\label{asqsol}
	q(x,t)=\left\{
	\begin{aligned}
	& \frac{2ip_{n-m}^2c_0^{as}(-\Re p_{n-m})}
	{p_{n-m}^2+c_0^{as}(-\Re p_{n-m})f_{n-m}^{as}(x_0,t)}+o(1),&
	t\to\infty,\quad x=-4\Re p_{n-m}t+x_0,\\
	&\frac{-2i\bar{p}^2_{n-m}\overline{f^{as}_{n-m}}(x_0,t)}
	{\overline{p}_{n-m}^2+\overline{c_0^{as}}(-\Re p_{n-m})
		\overline{f_{n-m}^{as}}(x_0,t)}+o(1),& t\to\infty,\quad x=4\Re p_{n-m}t-x_0,\\
	\end{aligned}
	\right.
	\end{equation}
	where $x_0\in\mathbb{R}$, $c_0^{as}(\xi)$ is given by (\ref{c0as}), and 
	$f_{n-m}^{as}(x_0, t)$ is given by
	\begin{equation}\label{fas}
	f_{n-m}^{as}(x_0, t)=\frac{\eta_{n-m}
		\exp\{2ip_{n-m}x_0-4it(\Re^2p_{n-m}+\Im^2p_{n-m})\}}
	{\dot a_1(p_{n-m}) \delta^2(p_{n-m},-\Re p_{n-m})},\quad m=\overline{0,n-1}.
	\end{equation}
	The asymptotics (\ref{asqsol}) is valid for all $t\gg0$ and $x_0\in\mathbb{R}$ such as
	$$
	p_{n-m}^2+c_0^{as}(-\Re p_{n-m})f_{n-m}^{as}(x_0, t) \neq 0.
	$$
	Moreover, as $x_0\to\pm\infty$, the asymptotics (\ref{asqsol}) match the
	asymptotics in the neighboring sectors in (\ref{asq1}).
\end{proposition}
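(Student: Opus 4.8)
The plan is to re-run the Riemann--Hilbert analysis of Section~\ref{asympt} along the distinguished direction $\xi=-\Re p_{n-m}$, but to \emph{retain} the residue condition at $k=p_{n-m}$ that was dropped in the generic-sector analysis of Proposition~\ref{asRH}. First I would observe that along the ray $x=-4\Re p_{n-m}t+x_0$ the exponential in the residue coefficient $f_{n-m}(x,t)$ of \eqref{f-j} collapses: writing $p_{n-m}=\Re p_{n-m}+i\Im p_{n-m}$, one checks that $2ip_{n-m}x+4ip_{n-m}^2t=2ip_{n-m}x_0-4it(\Re^2 p_{n-m}+\Im^2 p_{n-m})$, and since $\delta^2(p_{n-m},\xi)=\delta^2(p_{n-m},-\Re p_{n-m})$ is constant along the ray, we get $f_{n-m}(x,t)=f_{n-m}^{as}(x_0,t)$ with $f_{n-m}^{as}$ as in \eqref{fas}; this quantity is bounded and bounded away from $0$ in $t$, i.e.\ the $p_{n-m}$-residue is neither exponentially growing nor decaying. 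Because $\Re p_{n-m}\in(-\omega_{n-m},-\omega_{n-m-1})$ strictly, \eqref{3.8} gives $\Im\nu(-\xi)-m\in(-\tfrac12,\tfrac12)$ on this ray, so after extracting the $m$ growing zeros $p_{n-s}$, $s=\overline{0,m-1}$, the singularity of the transformed function at $k=-\xi$ is weak.

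Next I would carry out verbatim the deformations leading to Proposition~\ref{asRH}(i) with index $m$: the triangular factorization \eqref{tr}, the $\delta$-conjugation, the opening of the lenses onto $\hat\Gamma$, and the rational transformation \eqref{singul} removing the exponentially growing residues at $p_{n-s}$, $s=\overline{0,m-1}$ --- the only change being that the residue at $k=p_{n-m}$ is now kept, since its coefficient is $O(1)$. All residues at $p_{n-s}$, $s\ge m+1$, and at $-\bar p_j$ carry coefficients that are exponentially decaying uniformly for $x_0$ in a bounded set, so they are discarded with an $O(e^{-Ct})$ error exactly as before. The outcome is a RH problem with the jump matrix $J^{as}$ of \eqref{RHas1j} on $\hat\Gamma$, the weak singularity \eqref{singxi1} at $k=-\xi$, the residue condition $\operatorname{Res}_{k=0}M^{(2)}=c_0^{as}(\xi)M^{(1)}(0)$ from \eqref{RHas1c}, and the extra condition $\operatorname{Res}_{k=p_{n-m}}M^{(1)}=f(x_0,t)M^{(2)}(p_{n-m})$, where $f(x_0,t)$ is $f_{n-m}^{as}(x_0,t)$ multiplied by the time-independent rational factor produced by \eqref{singul}.

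Then, as $t\to\infty$, the same steepest-descent estimates used in the proof of Theorem~\ref{th1} apply: the $\hat\Gamma$-jump tends to $I$ exponentially fast off any neighbourhood of $k=-\xi$, and the weak singularity at $k=-\xi$ contributes only $O(t^{-1/2+|\Im\nu(-\xi)-m|})=o(1)$. Hence the problem reduces to the explicitly solvable model: find a rational $M(k)$ with $M\to I$ at $\infty$, no jump, a simple pole of $M^{(2)}$ at $k=0$ and of $M^{(1)}$ at $k=p_{n-m}$ subject to the two residue relations above. Putting $M^{(1)}=\binom{1}{0}+\frac1{k-p_{n-m}}\binom{A_1}{A_2}$ and $M^{(2)}=\binom{0}{1}+\frac1k\binom{B_1}{B_2}$ and solving the resulting $2\times2$ linear system gives $B_1$ and $A_2$ with common denominator $p_{n-m}^2+c_0^{as}f$; substituting into $q(x,t)=2i\lim_{k\to\infty}kM_{12}$ and $q(-x,t)=-2i\lim_{k\to\infty}k\overline{M_{21}}$ (see \eqref{sol}, \eqref{sol1}) and simplifying the rational factors yields \eqref{asqsol}, valid precisely when $p_{n-m}^2+c_0^{as}(-\Re p_{n-m})f_{n-m}^{as}(x_0,t)\neq0$, which is exactly the nondegeneracy of that linear system.

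Finally, the matching is read off \eqref{asqsol}: since $\exp\{2ip_{n-m}x_0\}$ contains the factor $e^{-2\Im p_{n-m}\,x_0}$, we have $f_{n-m}^{as}(x_0,t)\to0$ as $x_0\to+\infty$, so \eqref{asqsol} tends to $2ic_0^{as}(-\Re p_{n-m})=A\delta^2(0,-\Re p_{n-m})\prod_{s=0}^{m-1}(-\Re p_{n-m}/p_{n-s})^2$, the limiting value of the constant-type sector in \eqref{asq1}; and $f_{n-m}^{as}(x_0,t)\to\infty$ as $x_0\to-\infty$, so \eqref{asqsol} tends to $0$, the limiting value of the adjacent decaying sector, with the mirror statement for $q(-x,t)$. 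The main obstacle is the estimate in the previous step: controlling the error from the $k=-\xi$ parametrix and from the $\hat\Gamma$-jump \emph{uniformly} in $x_0$ (on compacts for the asymptotics itself, and along slowly growing $x_0$-windows for the matching), given that, because of the genuine singularity on the contour, this error is only $o(1)$ rather than $O(t^{-1/2})$; this requires tracking the dependence on $x_0$ in the resolvent bound for the small-norm RH problem, exactly as in the proof of Theorem~\ref{th1}.
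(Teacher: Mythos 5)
Your proposal is correct and takes essentially the same route as the paper: keep the now-bounded residue at $k=p_{n-m}$ along the ray, argue that the jump on $\hat\Gamma$ and the weak singularity at $k=-\xi$ contribute only $o(1)$, and read off the leading term from the explicitly solvable model with the two residue conditions at $k=0$ and $k=p_{n-m}$ --- the paper carries out this last step via a Blaschke--Potapov factorization with the regular part $M^{as,\,R}$ replaced by $I$, which is the same $2\times2$ linear algebra as your rational ansatz and produces the same denominator $p_{n-m}^2+c_0^{as}f$. The only minor divergence is bookkeeping: you keep the factor $\prod_{s=0}^{m-1}\bigl((p_{n-m}-p_{n-s})/(p_{n-m}-\Re p_{n-m})\bigr)^2$ generated by (\ref{singul}) inside the residue coefficient, whereas the paper states the condition (\ref{singxi1s}) directly with $f^{as}_{n-m}$ of (\ref{fas}) (the two agree for $m=0$), and this does not affect the structure of the argument.
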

\begin{proof}
	Similarly to item (i) in  Proposition \ref{asRH}, it can be shown that along the rays $\xi=-\Re p_{n-m}$, $m=\overline{0,n-1}$ (see Figure \ref{F1}), 
	the long-time behavior of  $q(x,t)$ can be described in terns of the solutions
	of the RH problem (cf. (\ref{RHas1}))
		\begin{subequations}\label{RHas1s}
		\begin{align}
		&M^{as}_+(\xi,t,k)=M^{as}_-(\xi,t,k)J^{as}(\xi,t,k),&& k\in\hat\Gamma,\\
		&M^{as}(\xi,t,k)\to I, && k\to\infty,\\
		\label{RHas1cs}
		&\underset{k=0}{\operatorname{Res}}M^{as\,(2)}(\xi,t,k)=
		c_{0}^{as}(\xi)M^{as\,(1)}(\xi,t,0),\\
		\label{singxi1s}
		&\underset{k=p_{n-m}}{\operatorname{Res}}M^{as\,(1)}(\xi,t,k)=
		f_{n-m}^{as}(x_0, t)M^{as\,(2)}(\xi,t,p_{n-m})\\
		&M^{as}(\xi,t,k)=
		\left(M^{as}_{-\xi}(\xi,t)+O(k+\xi)\right)
		(k+\xi)^{(\Im\nu(-\xi)-m)\sigma_3},&&
		k\to-\xi,
		\end{align}
	\end{subequations}
	where $J^{as}(\xi,t,k)$ is given by (\ref{RHas1j}) and $x_0\in\mathbb{R}$ parametrizes constant parallel shifts of the considered ray: 
	$x=-4\Re p_{n-m}t-x_0$, $m=\overline{0,n-1}$ (notice that such shift does not change the value of the slow variable $\xi=\frac{x}{4t}$ as $t\to\infty$).
	Using the Blaschke-Potapov factors (see e.g. \cite{FT, RSs}), the asymptotics of  $q(x,t)$ can be found in terms of the solution of a  regular Riemann-Hilbert problem:
	\begin{subequations}
		\label{sol+0}
		\begin{align}
		\label{sol+0a}
		q(x,t)&=2ip_{n-m}P_{12}(\xi,t)+
		2i\lim\limits_{k\to\infty}kM^{as,\,R}_{12}(\xi,t,k),
		\quad x>0,\\
		q(x,t)&=-2i\bar{p}_{n-m}\overline{P_{21}(-\xi,t)}
		-2i\lim\limits_{k\to\infty}
		k\overline{M^{as,\,R}_{21}(-\xi,t,k)},\quad x<0,
		\end{align}
	\end{subequations}
	where $M^{as,\,R}(\xi,t,k)$ solves the RH problem 
	\begin{subequations}\label{RHR}
		\begin{align}
		&M^{as,\,R}_+(\xi,t,k)=M^{as,\,R}_-(\xi,t,k)J^{as,\,R}(\xi,t,k),& k\in\hat\Gamma,\\
		&M^{as,\,R}(\xi,t,k)\rightarrow I, & k\rightarrow\infty,\\
		&M^{as,\,R}(\xi,t,k)=\left(M^{as,\,R}_{-\xi}(\xi,t)+O(k+\xi)\right)
		(k+\xi)^{(\Im\nu(-\xi)-m)\sigma_3}, & k\to-\xi,
		\end{align}
		with $\xi=-\Re p_{n-m}$, $m=\overline{0,n-1}$, and
		\begin{equation}
		\label{J^R}
		J^{as,\,R}(\xi,t,k)=
		\begin{pmatrix}
		1& 0\\
		0& \frac{k-p_{n-m}}{k}
		\end{pmatrix}
		J^{as}(\xi,t,k) 
		\begin{pmatrix}
		1& 0\\
		0& \frac{k}{k-p_{n-m}}
		\end{pmatrix},\quad k\in\hat{\Gamma}.
		\end{equation}
	\end{subequations}
	Here $P_{12}(\xi,t)$, $P_{21}(\xi,t)$ are determined in terms of $M^{as,\,R}(\xi,t,k)$ as follows:
	\begin{equation}
	\label{P}
	P_{12}(\xi,t)=\frac{g_1(\xi,t)h_1(\xi,t)}
	{g_1(\xi,t)h_2(\xi,t)-g_2(\xi,t)h_1(\xi,t)},\,
	P_{21}(\xi,t)=-\frac{g_2(\xi,t)h_2(\xi,t)}
	{g_1(\xi,t)h_2(\xi,t)-g_2(\xi,t)h_1(\xi,t)},
	\end{equation}
	where $g(\xi,t)=\left(
	\begin{smallmatrix}g_1(\xi,t)\\g_2(\xi,t) \end{smallmatrix}\right)$
	and  $h(\xi,t)=\left(
	\begin{smallmatrix}h_1(\xi,t)\\h_2(\xi,t) \end{smallmatrix}\right)$
	are given by
	\begin{subequations}\label{gh}
		\begin{align}
		g(\xi,t)&=p_{n-m}M^{as,\,R(1)}(\xi,t,p_{n-m})-
		f_{n-m}^{as}(x_0,t)M^{as,\,R(2)}(\xi,t,p_{n-m}),\\
		h(\xi,t)&=p_{n-m}M^{as,\,R(2)}(\xi,t,0)+ c_0^{as}(\xi)M^{as,\,R(1)}(\xi,t,0).
		\end{align}
	\end{subequations}
Setting $M^{as,R}(\xi,t,k)\approx I$ (as $t\to\infty$), one can calculate $g(\xi,t)$ and $h(\xi,t)$;  substituting them into (\ref{P}) and (\ref{sol+0}), 
the formulas for the main terms in (\ref{asqsol}) follow.
\end{proof}

Descriptions of  transition zones of type (2) and (3) are  open, challenging
 problems, which are 
 beyond the scope of this paper. 
For transition zones of type (2), we face the problem of winding of arguments of certain
spectral functions, see (\ref{windingRH}), whereas in the case when $\xi$  approaches
0 (the transition zone of type (3)), 
 we encounter another difficulty: the slow variable $\xi$ and the singularity of the Riemann-Hilbert problem (see (\ref{z}) and (\ref{zc}))  merge. For the focusing NNLS equation, we partially address the latter problem in \cite{RScv}, for the pure step initial data $q_0(x)=q_{0,A}(x)$, where we present a family of different asymptotic zones; particularly, the decaying zones (for $x<0$) are characterized by the 
decay of order $t^p\sqrt{\ln t}$, where $p<0$ parametrizes the family.
Applying similar ideas for the defocusing problem seems possible, 
but it will require substantial modifications since the behavior of the spectral functions as $k\to 0$ in the focusing and defocusing cases is different
(see item 5 of Proposition \ref{properties}).

\bigskip

\textbf{Concluding remark.}
\textit{
In the present work we have considered the Cauchy problem (\ref{1}), (\ref{2}) with the initial data close to the pure step function (\ref{shifted-step})  ``shifted to the right'', i.e., with $R>0$ (concerning the case $R=0$ see Remark \ref{rem4}). 
In the case $R<0$
(i.e., for initial data ``shifted to the left''), the spectral functions $a_j(k)$, $j=1,2$ and $b(k)$ associated with the pure step initial data (\ref{shifted-step})
can be explicitly calculated as well, but their analytical properties 
differ significantly from those in the case with $R>0$,  complicating the 
analysis of location of zeros (particularly, in this case $a_2(k)$ is not a constant)
and the respective winding properties of the spectral functions. 
The application of the IST method and subsequent asymptotic analysis in the case $R<0$ remain an open problem.
}


\end{document}